\theoremstyle{plain} 
\newtheorem{thm}{Theorem} 
\newtheorem{cor}[thm]{Corollary} 
\newtheorem{lem}[thm]{Lemma}
\newtheorem{example}[thm]{Example}
\newcommand{\Hom}{\mathit{Hom}}
\newcommand{\Ker}{\mathit{Ker}}
\newcommand{\Id}{\mathit{Id}}
\begin{document}

\title{Injective and projective semimodules over involutive semirings}
\date{July 21, 2020}

\author[1]{Peter Jipsen} 
\ead{jipsen@chapman.edu}
\address[1]{Chapman University, Faculty of Mathematics, California, USA.}

\author[2]{Sara Vannucci}
\ead{svannucci@unisa.it}
\address[2]{Dipartimento di Matematica,  Universit\`{a} di Salerno, Fisciano, Italy.}

\begin{abstract}
We show that the term equivalence between MV-algebras and MV-semirings lifts to involutive residuated lattices and a class of semirings called \textit{involutive semirings}. The semiring perspective helps us find a necessary and sufficient condition for the interval $[0,1]$ to be a subalgebra of an involutive residuated lattice. We also import some results and techniques of semimodule theory in the study of this class of semirings, generalizing results about injective and projective MV-semimodules. Indeed, we note that the involution plays a crucial role and that the results for MV-semirings are still true for involutive semirings whenever the Mundici functor is not involved. In particular, we prove that involution is a necessary and sufficient condition in order for projective and injective semimodules to coincide.
\end{abstract}

\begin{keyword}
Involutive residuated lattices \sep semirings \sep injective and projective semimodules \sep ideals
\end{keyword}

\maketitle

\section{Introduction}
Semirings and semimodules, and their applications, arise in various branches of mathematics, computer science, physics, as well as in many other areas of modern science (see, for example \cite{golan:sata}). Involutive residuated lattices arose in the literature as generalizations of classical propositional logic and classical linear logic (\cite{GJKO}). The article is organized as follows. In Section 1, for the reader's convenience, we provide all the necessary notions on semirings and involutive residuated lattices. Then, we prove a term equivalence between involutive residuated lattices and a special class of semirings that we shall call involutive $0$-free semirings. This categorical isomorphism helps us to find a necessary and sufficient condition for $[0,1]$ to be a subalgebra of an involutive residuated lattice. In particular, we show that $[0,1]$ is a subalgebra of an involutive residuated lattice if and only if $0$ is a multiplicatively idempotent element.
In the second section we focus our attention on those involutive residuated lattices for which $0$ is the bottom element. In this case $1$ is the top and so the lattice is bounded. We consider involutive semiring and we define involutive semimodules. Then we characterize injective and projective involutive semimodules, generalizing similar result for MV-semirings in \cite{dlnv:isaidscmv}. Indeed, involution seems to play a crucial role and the results for MV-semirings seem to be generalizable whenever the Mundici functor is not involved. We show for example that, for a finite commutative involutive semiring, injective and projective finitely generated semimodules coincide and we show, by providing a counterexample, that without the involution this is not true. 
It leads us to observe that, even if the involution appears only in the semiring and it doesn't affect at all the structure of the semimodule, it still plays a fundamental role in the study of injective and projective semimodules. 
Furthermore, we restate a well-known characterization of injective semimodules over a semiring $A$ in terms of the join-semilattice $Id(A)$ (the ideals of $A$ considered as a join-semilattice) with the reverse order.

\section{Involutive Semirings}

A \emph{0-free} \emph{semiring} is an algebra $(A, +, \cdot, 1)$ such that
\begin{itemize}
\item $(A, +)$ is a commutative semigroup,
\item $(A, \cdot, 1)$ is a monoid and
\item $a (b+c)=a b + a c$ and $(a+b) c=a c + b c$ for all $a, b, c \in A$
\end{itemize}
where, as usual, $ab$ is short for $a\cdot b$. A \emph{semiring} $(A,+,0,\cdot,1)$ satisfies the same axioms, as well as $a+0=a$ and $a0=0=0a$ for all $a\in A$. A \emph{semifield} is a semiring in which all non-zero elements have a multiplicative inverse.

A (0-free) semiring $A$ is \emph{commutative} if $a \cdot b = b \cdot a$, for all $a, b \in A$, it is \emph{1-bounded} if $a+1=1$
and (\emph{additively}) \emph{idempotent} if $a+a=a$ for every $a \in A$ (or equivalently $1+1=1$). 
Note that an idempotent semiring has a natural order defined on it by $x \leq y \iff x+y=y$, in which case $+$ is denoted by $\vee$ and $(S,\vee)$ is a join-semilattice. In the 1-bounded case, the identity $1$ is the top element. Such
algebras are also called \emph{integral}, but we do not use this terminology here since integrality has a different meaning in ring theory.

A \emph{residuated join-semilattice} or \emph{0-free residuated idempotent semiring} is an algebra $(A, \vee, \cdot, 1, \backslash, /)$ such that
\begin{itemize}
\item $(A, \vee)$ is a semilattice, with partial order defined by $x\le y\iff x\vee y=y$,
\item $(A, \cdot, 1)$ is a monoid and
\item \textup{(res)} \quad $xy \le z \iff x \le z/y \iff y \le x\backslash z$
\quad holds for all $x,y,z\in A$.
\end{itemize}
A \emph{pointed residuated join-semilattice} $(A,\vee,\wedge,\cdot,1,\backslash,/,0)$ is a residuated lattice with an additional constant $0$. Note that this constant need not be the least element of the lattice. An \emph{involutive residuated lattice} is a pointed residuated join-semilattice that satisfies
${\sim}{-}x = x = -{\sim}x$ for all $x\in A$, where ${\sim}x = x\backslash 0$ and $-x=0/x$.

The operations $\sim,-$ are order-reversing, and are called \emph{left} and \emph{right linear negation}. The residuation equivalences (res) can be replaced by four identities, hence involutive residuated lattices form a variety, denoted by \textsf{InRL}. 
It is well known (and easy to see) that
$\backslash,/,0$ can be expressed by the linear negations and the monoid operation: 
$$x\backslash y = {\sim}((-y)x), \quad
x/y = -(y({\sim}x) \ \text{ and } \ 0 = {\sim}1 = -1.$$
Residuation also implies that $\cdot$ distributes over $\vee$, hence $(A,\vee,\cdot,1)$ is a $0$-free idempotent semiring. 

Note that the constant $0$ is an additive identity if and only if $0$ is the bottom element, or equivalently $1$ is the top element, i.\,e., the semiring is 
$1$-bounded. It follows from (res) that $x0=0=0x$, so a $1$-bounded 
involutive residuated lattice has a semiring reduct.
If $\cdot$ is commutative then $x\backslash y=y/x$, hence ${\sim}x=-x$.

An \emph{MV-algebra} is a $1$-bounded commutative involutive residuated 
lattice that satisfies $x\vee y=(x/y)\backslash x$, 
(though these algebras are usually defined 
using the operations $\oplus,-,0$, where $x\oplus y = {\sim}(-y\cdot -x)$) \cite{cdm:afomvr}.

An \emph{MV-semiring} \cite{dr:sasiimva} is an algebra $(A,\vee,0,\cdot,1,-)$ such
that
\begin{itemize}
\item $(A,\vee,0,\cdot,1)$ is a commutative idempotent semiring,
\item $x \le y \iff x \cdot -y = 0$ and
\item $x\vee y=-(-x\cdot -(-x\cdot y))$ for all $x,y\in A$.
\end{itemize}

Proposition 4.11 in \cite{dr:sasiimva} shows that MV-algebras and MV-semirings are
term-equivalent. This result has led to fruitful interaction between
research in fuzzy logic and semirings/semimodules \cite{dlnv:isaidscmv,dr:sasiimva}. Below we show that the term-equivalence lifts to involutive residuated lattices and a class of
non-commutative 0-free semirings. This expands the applicability of semiring
techniques to involutive residuated lattices since 1-boundedness and the third axiom of MV-semirings are not required to hold. The general term-equivalence was first shown between coupled semirings and involutive residuated lattices in \cite{jip:raisgbia}.
The variety of involutive residuated lattices is considerably more general than the variety of MV-algebras since the latter have distributive lattice reducts, while there are non-distributive 1-bounded involutive residuated lattices (the smallest examples have seven elements).

An \emph{involutive semiring} is an algebra $(A, \vee, \cdot, 1, {\sim}, -)$ such that
\begin{itemize}
\item $(A, \vee, \cdot, 1)$ is a $0$-free idempotent semiring and
\item $x \le y \iff x \cdot {\sim}y \le -1 \iff -y\cdot x\le -1$ for all $x,y\in A$.
\end{itemize}
The element $-1$ is denoted by $0$, although it need not be the bottom
element of the join-semilattice.

\begin{thm}\label{termeq}
Involutive residuated lattices are term-equivalent to involutive semirings.
\end{thm}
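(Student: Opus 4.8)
The plan is to produce explicit term-translations in both directions and verify that they are mutually inverse. Given an involutive residuated lattice $(A,\vee,\wedge,\cdot,1,\backslash,/,0)$, I would pass to $(A,\vee,\cdot,1,{\sim},-)$ by keeping $\vee,\cdot,1$ and using the definitions ${\sim}x=x\backslash 0$, $-x=0/x$; that $(A,\vee,\cdot,1)$ is a $0$-free idempotent semiring was noted before the theorem, and the biconditional axiom follows from (res) together with the fact that ${\sim}$ and $-$ are order-reversing bijections: $x({\sim}y)\le 0\iff{\sim}y\le{\sim}x\iff x\le y$ and dually $(-y)x\le 0\iff -y\le -x\iff x\le y$, while $0/1=0$ identifies $-1$ with $0$. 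Conversely, given an involutive semiring $(A,\vee,\cdot,1,{\sim},-)$, I would keep $\vee,\cdot,1$ and set
\[
0:=-1,\qquad x\backslash y:={\sim}((-y)x),\qquad x/y:=-(y({\sim}x)),\qquad x\wedge y:={\sim}((-x)\vee(-y)).
\]

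The substantive step is to recover the axioms of an involutive residuated lattice from the single biconditional $x\le y\iff x({\sim}y)\le 0\iff(-y)x\le 0$ (with $0:=-1$). Specialising $x=y$ gives $a({\sim}a)\le 0$ and $(-a)a\le 0$ for every $a$. Using the first form of the biconditional, $-({\sim}{-}a)\le -a$ (because $-({\sim}{-}a)({\sim}{-}a)\le 0$ is the instance $(-b)b\le 0$ at $b={\sim}{-}a$), so monotonicity of $\cdot$ together with $(-a)a\le 0$ yields $-({\sim}{-}a)a\le 0$, i.e.\ $a\le{\sim}{-}a$; conversely ${\sim}{-}a\le a$ because $(-a)({\sim}{-}a)\le 0$ is the instance $b({\sim}b)\le 0$ at $b=-a$. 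Hence ${\sim}{-}a=a$, and a symmetric argument gives $-{\sim}a=a$; it follows that ${\sim}$ and $-$ are mutually inverse order-reversing bijections and that $0=-1={\sim}1$. Residuation is then immediate from the biconditional and $-{\sim}w=w={\sim}{-}w$: $y\le{\sim}((-z)x)\iff(-z)(xy)\le 0\iff xy\le z$ and $x\le -(y({\sim}z))\iff(xy)({\sim}z)\le 0\iff xy\le z$. Finally, since ${\sim}$ is an anti-automorphism of the semilattice with inverse $-$, the element ${\sim}((-x)\vee(-y))$ is the greatest lower bound of $x$ and $y$, so $(A,\vee,\wedge)$ is a lattice; adjoining $0=-1$ we obtain a pointed residuated join-semilattice whose derived negations $x\backslash 0$ and $0/x$ are exactly the original ${\sim}$ and $-$ (using $-0={\sim}0=1$), hence an involutive residuated lattice.

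It remains to check the two round trips. Starting from an involutive residuated lattice and applying both translations, the recovered constant is $0/1=0$ and the recovered $\backslash,/,\wedge$ are given by $x\backslash y={\sim}((-y)x)$, $x/y=-(y({\sim}x))$, $x\wedge y={\sim}((-x)\vee(-y))$, all of which hold in every involutive residuated lattice (the first two recalled before the theorem, the last being De Morgan for the involution), so the algebra is returned unchanged. Starting from an involutive semiring, one computes $x\backslash 0={\sim}((-0)x)={\sim}x$ and $0/x=-(x({\sim}0))=-x$, while $\vee,\cdot,1$ are untouched, so again the algebra is returned unchanged. I expect the extraction of the double-negation law ${\sim}{-}x=x=-{\sim}x$ from the biconditional to be the one real obstacle; once it is available, residuation, the lattice structure and both round trips are short formal checks.
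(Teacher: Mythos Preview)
Your proposal is correct and follows essentially the same route as the paper: translate via ${\sim}x=x\backslash 0$, $-x=0/x$ in one direction and $0=-1$, $x\backslash y={\sim}((-y)x)$, $x/y=-(y({\sim}x))$, $x\wedge y={\sim}((-x)\vee(-y))$ in the other, with the double-negation identities ${\sim}{-}x=x=-{\sim}x$ as the key step on the semiring side. Your extraction of these identities differs tactically from the paper's---you use monotonicity of $\cdot$ together with the instances $b({\sim}b)\le 0$ and $(-b)b\le 0$, whereas the paper first proves the auxiliary identity $-{\sim}{-}x=-x$ by substitution and then bootstraps---but both arguments are short and yield the same conclusion; you also verify both round trips explicitly, which the paper leaves partly implicit.
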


\begin{proof}
As mentioned before, involutive residuated lattices have 0-free semiring
reducts, and $x\le y$ is equivalent to $x\le -{\sim}y=0/{\sim}y$, which 
by (res) is equivalent to $x\cdot {\sim}y\le 0=-1$. The equivalence
$x\le y\iff -y\cdot x\le -1$ is proved similarly, showing that any involutive
residuated lattice is an involutive semiring.

Conversely, let $A$ be an involutive semiring and define $x\wedge y=
{\sim}(-x\vee -y)$. It remains to prove the identities
${\sim}{-}x=x=-{\sim}x$, that $(A,\vee,\wedge)$ is a lattice
and that (res) holds.

To prove the identity ${\sim}{-}x=x$, note that $-x\leq y \iff -x\cdot {\sim}y\leq 0 \iff {\sim}y\leq x$. Substituting $y$ by $-x$ we get ${\sim}{-}x\leq x$, hence ${\sim}{-}{\sim}{-}x\leq {\sim}{-}x\leq x$, or equivalently $-x\leq -{\sim}{-}x$.
Similarly we can substitute $x$ by ${\sim}y$ obtaining $-{\sim}y\leq y$, and replacing $y$ by $-x$ we have $-{\sim}{-}x\leq -x$, hence the identity $-x=-{\sim}{-}x$ holds.

Now $x\leq x$ implies $-x \cdot x\leq 0$, hence $-0 \cdot (-x \cdot x)\leq 0$. From the preceding identity it follows that $-{\sim}{-}0 \cdot (-{\sim}{-}x \cdot x)\leq 0$, which implies ${-}{\sim}{-}x \cdot x\leq {\sim}{-}0\leq 0$ and therefore $x\leq {\sim}{-}x$. So we have shown that the identity ${\sim}{-}x=x$ holds, and $-{\sim}x=x$ is proved similarly.

Next, observe that $x\le y\iff -y\cdot x\le 0\iff -y\cdot {\sim}-x\le 0\iff
-y\le -x$. A similar calculation for $\sim$ shows that the unary operations
in an involutive semiring are order-reversing inverses of each other. 
Since $(A,\vee)$ is a join-semilattice, it follows that $(A,\wedge)$ 
is a meet-semilattice with respect to the same order.

We now prove the absorption laws:
$x \wedge (x \vee y)= {\sim} (-x \vee -(x \vee y))={\sim}{-}x = x$ since $x \leq x \vee y$ implies $-(x \vee y) \leq -x$). Similarly
$x \vee (x \wedge y) = x\vee {\sim} (-x \vee -y) = x$ since $-x \leq -x \vee -y$ implies ${\sim}(-x \vee -y) \leq x$. Hence $(A,\vee,\wedge)$ is a lattice.

Finally we prove that if the residuals are defined as $x \backslash z = {\sim} (-z \cdot x)$ and $z/y = -(y \cdot {\sim} z)$ then (res) holds:
$$y \leq {\sim} (-z \cdot x) \Leftrightarrow -z \cdot x \leq -y \Leftrightarrow -z \cdot x \cdot y \leq -1 \Leftrightarrow -z \leq -(x \cdot y) \Leftrightarrow xy \leq z.$$
The second equivalence of (res) is proved similarly, hence any involutive
semiring determines an involutive residuated lattice. The term-equivalence is established by observing that $x\backslash 0={\sim}(-0\cdot x)={\sim}(-{\sim}1\cdot x)={\sim}x$ and likewise $0/x=-x$.
\end{proof}

In the next result we use standard interval notation, so $[0,1] = \{ a \mid 0 \leq a \leq 1\}$.

\begin{thm}
In any involutive semiring (equivalently involutive residuated lattice) the interval $[0,1]$ is a subalgebra if and only if $0$ is a multiplicative idempotent element, i.\,e., $0\cdot 0=0$.
\end{thm}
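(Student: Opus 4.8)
The plan is to carry out the verification in the involutive semiring signature $(A,\vee,\cdot,1,{\sim},-)$, in which the meet $\wedge$ and the constant $0={\sim}1=-1$ are term-definable; so $[0,1]$ is a subalgebra exactly when it is nonempty, contains $1$, and is closed under $\vee$, $\cdot$, ${\sim}$ and $-$. Before splitting into the two implications I would record a few facts with no real content: that $\cdot$ is monotone in each argument (this follows from residuation, or from the distribution of $\cdot$ over $\vee$ noted above); that ${\sim}0=1$ and $-0=1$, which are just the involutive identities ${\sim}{-}1=1$ and $-{\sim}1=1$ rewritten via $0={\sim}1=-1$; and that, instantiating the defining equivalence of an involutive semiring at $x=0$, $y=1$, one gets $0\le 1\iff 0\cdot 0\le 0$ (since $0\cdot{\sim}1=0\cdot 0$ and $-1=0$). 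This last equivalence is really the heart of the matter.

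For the ``if'' direction I would assume $0\cdot 0=0$. Then $0\cdot 0\le 0$, so $0\le 1$, and therefore $0,1\in[0,1]$. Closure of an interval under $\vee$ (and under the term-defined $\wedge$) in a semilattice is automatic. Closure under ${\sim}$ and $-$ is automatic as well: applying the order-reversing maps to $0\le a\le 1$ gives $0={\sim}1\le{\sim}a\le{\sim}0=1$, and likewise for $-$. The only operation that is not automatic is $\cdot$, and here monotonicity does the job: for $a,b\in[0,1]$ we get $ab\le 1\cdot b=b\le 1$ from above and $ab\ge 0\cdot b\ge 0\cdot 0=0$ from below, so $ab\in[0,1]$. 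Hence $[0,1]$ is a subalgebra.

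For the ``only if'' direction I would assume $[0,1]$ is a subalgebra. Since it contains the constant $1$ we have $0\le 1$, so $0\cdot 0\le 0$ by the equivalence above; since it also contains $0$ and is closed under $\cdot$, we have $0\cdot 0\in[0,1]$, so $0\le 0\cdot 0$. The two inequalities give $0\cdot 0=0$.

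I do not expect a genuine obstacle here: the lattice operations and the negations contribute nothing, and the argument reduces to the single equivalence $0\le 1\iff 0\cdot 0\le 0$ together with the trivial bounds $0\cdot 0\le ab$ and $ab\le b\le 1$ valid for all $a,b\in[0,1]$. The only points that need care are bookkeeping ones — making sure that in the ``only if'' direction $0\in[0,1]$ and $1\in[0,1]$ are genuinely forced by the signature, and remembering that $1$ need not be the top element of the lattice, so that $ab\le 1$ must be obtained from monotonicity and $1\cdot b=b$ rather than taken for granted.
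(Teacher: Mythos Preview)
Your proposal is correct and follows essentially the same approach as the paper's proof: both hinge on the equivalence $0\le 1\iff 0\cdot 0\le 0$, use the order-reversing property of ${\sim},-$ for closure under the negations, and use monotonicity of $\cdot$ together with $0\cdot 0=0$ to sandwich $ab$ between $0$ and $1$. Your version is slightly more explicit about why $ab\le 1$ requires monotonicity (since $1$ need not be the top) and about why $0$ lies in the subalgebra in the ``only if'' direction, but the argument is the same.
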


\begin{proof}
Assume $[0,1]$ is a subalgebra of an involutive semiring. Then $0\le 1$ since
any subalgebra contains the constant $1$. We also have that $0 \leq 1 \iff 0 \leq -{\sim}1 \iff 0 \cdot 0 \leq 0$. The reverse inequality $0\le 0\cdot 0$
holds because any subalgebra is closed under $\cdot$ and hence we obtain $0 \cdot 0 = 0$.

Conversely, assume $0\cdot 0=0$. The equivalence $0 \cdot 0 \leq 0 \iff 0 \leq 1$ shows that $0,1$ are in the interval $[0,1]$. The interval is certainly closed under joins, and closure under ${\sim}$ follows from
$$
0\le a\le 1\quad\iff\quad 0={\sim}1\le{\sim}a\le{\sim}0=1.
$$
The argument for closure under $-$ is the same.
As regards the multiplication we have that if $a, b \in [0,1]$ then $ab \leq a,b$ and in particular $ab \leq 1$.
Observe that if $a \leq b$ and $c \leq d$, we have that $ac \leq bd$, so $0 \leq a, b$ implies $0 \cdot 0 \leq ab$. Since we assume $0 \cdot 0 = 0$ we have that $0 \leq ab$, hence $a,b\in [0,1]$.
\end{proof} 

In an involutive residuated lattice $0$ is the bottom element if and only if $1$ is the top element. 
Hence \emph{1-bounded involutive semirings} can be defined as idempotent semirings with two unary operations ${\sim},-$ such that 
$$
x \le y \iff x \cdot {\sim}y =0 \iff -y\cdot x=0.
$$

\section{Semimodules over 1-bounded involutive semirings}

Let $A$ be a semiring. A (left) $A$-\emph{semimodule} is a commutative monoid 
$(M, +, 0)$ with a scalar multiplication $\cdot : A \times M \to M$, such that the following conditions hold for all $a, b \in A$ and $x, y \in M$:

\begin{itemize}
\item $(ab) \cdot x= a \cdot (b \cdot x)$
\item $a \cdot (x + y) = (a \cdot x) + (a \cdot y)$
\item $(a + b) \cdot x = (a \cdot x) + (b \cdot x)$
\item $0_{A} \cdot x= 0_{M}= a \cdot 0_{M}$
\item $1 \cdot x=x$.
\end{itemize}

For example, any semiring $A$ can be considered a \emph{regular} left $A$-semimodule with scalar multiplication $a\cdot x=ax$ for all $a,x\in A$.
The definition of right $A$-semimodules is completely analogous. From now on, we will refer generically to semimodules without specifying left or right and we will use the notations of left semimodules.

Since the intersection of $A$-semimodules is an $A$-semimodule we can define finitely generated and cyclic semimodules in a standard way, in particular an $A$-semimodule $M$ is cyclic if and only if there exists $m \in M$ such that $M = Am = \{am \mid a \in A\}$.
An example of left cyclic semimodule over a semiring $A$ is given by $Ax = \{ax \mid a \in A\}$ with $x \in A$, it is the principal left-ideal of the semiring $A$ generated by $x$. 
If $A$ is additively idempotent, then any $A$-semimodule is also
additively idempotent, hence a join-semilattice with $0$  (since $x=1x=(1+1)x=x+x$). In this case 
we write $\vee$ instead of $+$ and often make use of the natural order given
by $x\le y \iff x\vee y=y$.

Let $(M, +, 0)$ and $(N, +, 0)$ be two semimodules over a semiring $A$. For any subsemiring $B$ of $A$, we can consider $M,N$ semimodules over $B$. A \emph{$B$-semimodule homomorphism} is a function $f: M \to N$ such that $f(m + m')=f(m) + f(m')$ and $f(b \cdot m)=b \cdot f(m)$ for all $m, m' \in M$ and $b \in B$. The set of all such homomorphisms is denoted by $\Hom_B(M,N)$. If we take $M$ be the semiring $A$, considered as a semimodule over itself, then $\Hom_B(A,N)$ is an $A$-semimodule with pointwise addition, and scalar multiplication $a\cdot f$ given by $(a\cdot f)(t)=f(ta)$ for all $t\in A$. Note that $a$ has to act from the right if $A$ is noncommutative, while for commutative $A$ it holds in general that $\Hom_B(M,N)$ is an $A$-semimodule.

Let $A$ be a semiring and $(M, +, 0)$ an $A$-semimodule. 
An $A$-semimodule $E$ is \textit{injective} if and only if, given an $A$-semimodule $M$ and a subsemimodule $N$ of $M$, any semimodule homomorphism $\alpha$ from $N$ to $E$ can be extended to a semimodule homomorphism $\beta$ from $M$ to $E$ such that $\beta\iota=\alpha$. 
\medskip
\begin{center}
\begin{tikzcd}
N \arrow [d, hook, "\iota"] \arrow[r, "\alpha"] & E\\
M \arrow [ur,dashed,"\beta"] 
\end{tikzcd}
\end{center}
\medskip
A semiring $A$ is called \textit{self-injective} if the regular $A$-semimodule $A$ is injective.

Recall that $\mathbb B$ denotes the 2-element Boolean semifield. A semimodule $M$ is a \emph{retract} of a semimodule $M'$ if there exist homomorphisms $r:M'\to M,s:M\to M'$ such that the composition $rs$ is the identity map on $M$.
\begin{thm}\label{inj}  \textup{(}\cite{dlnv:isaidscmv}\textup{)}
Let $A$ be an additively idempotent semiring and $M$ an $A$-semimodule. Then $M$ is injective if and only if there exists a set $X$ such that $M$ is a retract of the $A$-semimodule $\Hom_{\mathbb{B}} (A, \mathbb{B})^{X}$.
\end{thm}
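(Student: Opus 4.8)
The plan is to prove both implications by reducing to two facts: that $H:=\Hom_{\mathbb B}(A,\mathbb B)$ is an injective $A$-semimodule, and that every additively idempotent $A$-semimodule embeds into some power $H^X$. Granting these, the implication $(\Leftarrow)$ is soft: any power $H^X$ of the injective semimodule $H$ is injective (extend component-wise), and a retract of an injective semimodule is injective (if $rs=\Id_M$ with $s\colon M\to E$, $r\colon E\to M$ and $E$ injective, then a given $\alpha\colon N\to M$ is extended by $r\beta'$, where $\beta'\colon P\to E$ extends $s\alpha$). For $(\Rightarrow)$, if $M$ is injective, fix an embedding $\eta\colon M\hookrightarrow H^X$; applying the defining extension property of $M$ to the inverse isomorphism $\eta(M)\to M$ along the inclusion $\eta(M)\hookrightarrow H^X$ yields $r\colon H^X\to M$ with $r\eta=\Id_M$, so $M$ is a retract of $H^X$.

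The injectivity of $H$ is where the hypothesis of additive idempotency is genuinely used, and I expect it to be the main obstacle. Additive idempotency is precisely what makes $0\mapsto 0_A$, $1\mapsto 1_A$ a semiring homomorphism $\mathbb B\to A$ (it forces $1_A+1_A=1_A$); restriction of scalars along it carries $A$-semimodules to $\mathbb B$-semimodules, which are exactly join-semilattices with least element $0$ (as $1+1=1$ gives $x+x=x$). This restriction functor has a right adjoint $\Hom_{\mathbb B}(A,-)$, the corresponding bijection $\Hom_A(N,\Hom_{\mathbb B}(A,\mathbb B))\cong\Hom_{\mathbb B}(N,\mathbb B)$ being implemented by $g\mapsto\tilde g$ with $\tilde g(n)(a)=g(an)$ — and one checks $\tilde g$ is an $A$-homomorphism using the action $(a\cdot f)(t)=f(ta)$ from Section 3. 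Since restriction of scalars obviously preserves injectivity of homomorphisms, its right adjoint $\Hom_{\mathbb B}(A,-)$ preserves injective objects, so it suffices to show $\mathbb B$ is injective as a $\mathbb B$-semimodule. Given a sub-semilattice $N\le P$ and a homomorphism $\alpha\colon N\to\mathbb B$, the set $I:=\alpha^{-1}(0)$ is a downset of $N$ closed under finite joins, its down-closure $\tilde I:=\{p\in P:p\le n\text{ for some }n\in I\}$ is a downset of $P$ closed under finite joins with $\tilde I\cap N=I$, and the characteristic function of $P\setminus\tilde I$ is a $\mathbb B$-homomorphism extending $\alpha$ (the complement of a join-closed downset is join-prime, which is exactly what $\beta(x\vee y)=\beta(x)\vee\beta(y)$ requires). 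Transporting along the adjunction then extends any homomorphism $N\to H$ to $P$.

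For the embedding, take $X:=\Hom_{\mathbb B}(M,\mathbb B)$ and $\eta\colon M\to H^X$, $\eta(m)=(\tilde g(m))_{g\in X}$; this is an $A$-homomorphism, and it is injective as a map because the $\mathbb B$-valued homomorphisms of $M$ separate points: for $x\not\le y$ in the semilattice $M$, the set ${\downarrow}y=\{z:z\le y\}$ is a join-closed downset containing $0$, so the characteristic function of its complement is a $\mathbb B$-homomorphism $M\to\mathbb B$ sending $y$ to $0$ and $x$ to $1$. An injective $A$-homomorphism between additively idempotent semimodules is an embedding, since it is bijective onto its image and the inverse of a bijective semimodule homomorphism is again a homomorphism; hence $M$ is isomorphic to a subsemimodule of $H^X$, and feeding $\eta$ into the $(\Rightarrow)$ step above finishes the argument. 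The remaining work is routine: the diagram chases showing products and retracts of injectives are injective, and the verifications that $\tilde g$ and $\eta$ really are $A$-semimodule homomorphisms.
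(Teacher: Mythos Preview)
The paper does not prove this theorem at all: it is stated with a citation to \cite{dlnv:isaidscmv} and used as a black box, so there is no ``paper's own proof'' to compare against. Your proposal, by contrast, supplies a complete self-contained argument, and it is correct.

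Your strategy is the standard one for this kind of characterization: establish that $H=\Hom_{\mathbb B}(A,\mathbb B)$ is an injective cogenerator in the category of $A$-semimodules. The two structural facts you isolate --- that products and retracts of injectives are injective, and that an injective object is a retract of anything it embeds into --- reduce the problem to (i) showing $H$ is injective and (ii) showing every $A$-semimodule embeds into a power of $H$. For (i), your use of the restriction/coextension adjunction $\Hom_A(N,\Hom_{\mathbb B}(A,-))\cong\Hom_{\mathbb B}(N,-)$ is exactly right, and the explicit construction extending $\alpha\colon N\to\mathbb B$ by taking the down-closure in $P$ of the ideal $\alpha^{-1}(0)$ is the clean way to see that $\mathbb B$ is injective over itself. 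For (ii), the evaluation map into $H^{\Hom_{\mathbb B}(M,\mathbb B)}$ together with the separation argument via principal downsets $\downarrow y$ does the job. All verifications you flag as routine really are routine and go through with the scalar action $(a\cdot f)(t)=f(ta)$ the paper fixes.

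One minor point worth making explicit when you write it up: the argument that $\mathbb B$ sits inside $A$ as a subsemiring uses $0_A\ne 1_A$; the degenerate case $A=\{0\}$ is trivial but should be acknowledged or excluded.
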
 

Let $A$ be a semiring. An $A$-semimodule $P$ is \textit{projective} if  the following condition holds: if $\varphi: M \longrightarrow N$ is a surjective $A$-homomorphism of $A$-semimodules and if
$\alpha: P \longrightarrow N$ is an $A$-homomorphism then there exists an $A$-homomorphism $\beta: P \longrightarrow M$
satisfying $\varphi\beta = \alpha$. 
\medskip
\begin{center}
\begin{tikzcd}
& M \arrow[d, two heads, "\varphi"] \\
P \arrow[ur, dashed, "\beta"]  \arrow[r, "\alpha"] & N
\end{tikzcd}
\end{center}
\medskip
It is well-known that in any variety of algebras the projective objects are the retracts of free objects. 
In the category of semimodules over a semiring $A$, the free object over a set $X$ is $A^{(X)}=\{f:X\to A\mid f(x)=0\text{ for all but finitely many $x\in X$}\}$ (\cite{dr:sasiimva}). So, we obtain the following characterization of projective semimodules.

\begin{thm}
Let $A$ be a semiring. An $A$-semimodule $P$ is projective if and only if it is a retract of the semimodule $A^{(X)}$ for some set $X$.
\end{thm}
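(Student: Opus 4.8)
The plan is to deduce the statement directly from the two facts recalled immediately before it: in any variety of (possibly infinitary) algebras the projective objects are precisely the retracts of the free objects, and the free $A$-semimodule on a set $X$ is $A^{(X)}$. Since $A$-semimodules are exactly the algebras in the signature consisting of $+$, the constant $0$, and one unary operation $x\mapsto a\cdot x$ for each $a\in A$, subject to the identities in the definition of a semimodule, they form such a variety; hence $P$ is projective if and only if it is a retract of some free semimodule $A^{(X)}$, which is the claim. That said, I would also spell out both implications so that the section is self-contained.

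For the ``if'' direction I would first check that $A^{(X)}$ itself is projective. Writing $e_x\in A^{(X)}$ for the map sending $x$ to $1$ and every other element of $X$ to $0$, the family $\{e_x\}_{x\in X}$ freely generates $A^{(X)}$, so a semimodule homomorphism with domain $A^{(X)}$ is determined by, and may be prescribed arbitrarily on, the $e_x$. Thus, given a surjection $\varphi\colon M\to N$ and a homomorphism $\psi\colon A^{(X)}\to N$, I would use surjectivity of $\varphi$ (and the axiom of choice) to pick $m_x\in M$ with $\varphi(m_x)=\psi(e_x)$ for each $x$, let $\gamma\colon A^{(X)}\to M$ be the homomorphism with $\gamma(e_x)=m_x$, and note that $\varphi\gamma$ and $\psi$ are homomorphisms agreeing on the generators $e_x$, hence equal. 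Now if $P$ is a retract of $A^{(X)}$, say $rs=\Id_P$ with $r\colon A^{(X)}\to P$ and $s\colon P\to A^{(X)}$, then for $\varphi\colon M\to N$ surjective and $\alpha\colon P\to N$ I would lift $\alpha r$ through $\varphi$ to some $\gamma\colon A^{(X)}\to M$ as above and set $\beta=\gamma s$; then $\varphi\beta=\varphi\gamma s=\alpha r s=\alpha$, so $P$ is projective.

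For the ``only if'' direction, take $X$ to be the underlying set of $P$ (any generating set of $P$ would do). By the universal property of the free semimodule there is a homomorphism $\pi\colon A^{(X)}\to P$ with $\pi(e_x)=x$ for all $x\in P$, and $\pi$ is surjective because its image is a subsemimodule containing the generating set $X$. Applying the defining property of projectivity of $P$ to the surjection $\varphi=\pi$ and the homomorphism $\alpha=\Id_P$ yields $\beta=s\colon P\to A^{(X)}$ with $\pi s=\Id_P$. Hence $P$ is a retract of $A^{(X)}$, with $r=\pi$.

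None of the steps presents a genuine obstacle; the only points requiring care are the verification that homomorphisms out of $A^{(X)}$ are freely determined by their values on the $e_x$ (which rests on the semimodule identities $1\cdot x=x$, the two distributive laws and $0\cdot x=0=a\cdot 0$), and the appeal to the axiom of choice when lifting all generators simultaneously. Since both are standard and the variety-level version of the result is already cited, the argument is short.
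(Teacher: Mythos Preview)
Your proposal is correct and follows exactly the approach of the paper, which simply records the theorem as an immediate consequence of the two facts stated just before it (projectives in a variety are the retracts of free objects, and $A^{(X)}$ is the free $A$-semimodule on $X$) without spelling out the details. Your write-up merely expands these standard verifications, so there is no substantive difference.
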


\section{Injective and projective semimodules over involutive semirings}

Let $A=(A, \vee, \cdot, 0, 1)$ be an additively idempotent semiring and $(M, \vee, 0)$ a semimodule over it. 
$M$ is called \emph{MID-complete} if the semilattice $M$ is a complete lattice, and if $M$ satisfies the meet infinite distributive identity (MID), i. e. 
$m\vee \bigwedge_{i \in I} m_{i} = \bigwedge_{i \in I} (m \vee m_{i})$.

A join-semilattice $(A, \vee, 0)$ is join-distributive if for any $a$, $b_0$ and $b_1$ elements of $A$ such that $a \leq b_0 \vee b_1$, then there exist $a_0$, $a_1$ $ \in A$ such that $a_0 \leq b_0$, $a_1 \leq b_1$ and $a = a_0 \vee a_1$.

An \emph{ideal} of a join-semilattice $(A, \vee, 0)$ is a subset $I$ of $A$ such that
\begin{itemize}
\item if $a, b \in I$, then $a \vee b \in I$;
\item if $ a \in I$, $b \in A$ and $b \leq a$, then $b \in I$.
\end{itemize}

In the rest of the section we shall write ideal of a semiring $A$ meaning that such ideal is to be considered as an ideal according to the above definition (i.e. ideal of a join-semilattice and not ideal of a semiring).

The following result is well known from lattice theory.
\begin{lem}
Let $A$ be a join-semilattice. Then the lattice of ideals $(\Id(A), \cap, \wedge)$, ordered by reverse inclusion, is complete. If $A$ is join-distributive, then $\Id(A)$ is MID-complete
(i.\,e. $J \cap \bigwedge_{i \in I} J_{i \in I} = \bigwedge_{i \in I} (J \cap J_{i \in I})$, for any $J, J_{i} \in \Id(A)$).
\end{lem}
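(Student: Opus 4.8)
\medskip
\noindent\emph{Sketch of proof.}
The plan is to first identify the lattice operations on $\Id(A)$ under the reverse-inclusion order, deduce completeness from this, and then verify (MID) directly using join-distributivity. Order $\Id(A)$ by $J \sqsubseteq K$ iff $J \supseteq K$. For any family $\{J_i\}_{i\in I}$ of ideals, the intersection $\bigcap_{i\in I} J_i$ is again an ideal (each $J_i$ is closed under finite joins and is a down-set, hence so is the intersection), and it is immediate that it is the $\sqsubseteq$-least upper bound of the family: every $J_i$ contains $\bigcap_i J_i$, and any ideal contained in all the $J_i$ is contained in their intersection. So $\Id(A)$ has all joins with respect to $\sqsubseteq$, the join of $\{J_i\}$ being $\bigcap_i J_i$; in particular $\Id(A)$ has a greatest and a least element. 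A poset in which every subset has a join is a complete lattice, so $\Id(A)$ is complete, and its meet $\bigwedge_i J_i$ is the ideal $\langle\bigcup_i J_i\rangle$ generated by $\bigcup_i J_i$. Because the $J_i$ are themselves ideals, this meet has the explicit form $\bigwedge_i J_i = \{ a \in A : a \le b_1 \vee \dots \vee b_n \text{ for some } n \ge 1 \text{ and } b_k \in \bigcup_i J_i\}$.

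Next, assuming $A$ is join-distributive, I must show $J \cap \bigwedge_i J_i = \bigwedge_i (J \cap J_i)$ for all ideals $J, J_i$, the meets being taken in $\Id(A)$. The inclusion $\supseteq$ holds unconditionally: each $J \cap J_i$ is contained in $J$ and in $\bigwedge_i J_i$, hence so is the ideal it generates, hence so is $\bigwedge_i (J \cap J_i)$. For the inclusion $\subseteq$, let $a \in J \cap \bigwedge_i J_i$. By the explicit description of the meet, $a \le b_1 \vee \dots \vee b_n$ with $b_k \in J_{i_k}$ for suitable indices. A routine induction on $n$ extends join-distributivity from two summands to $n$ summands, producing $a_1, \dots, a_n$ with $a_k \le b_k$ and $a = a_1 \vee \dots \vee a_n$. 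Each $a_k$ satisfies $a_k \le a \in J$, so $a_k \in J$ since $J$ is a down-set, and $a_k \le b_k \in J_{i_k}$, so $a_k \in J_{i_k}$; thus $a_k \in J \cap J_{i_k} \subseteq \bigcup_i (J \cap J_i)$, whence $a = a_1 \vee \dots \vee a_n \in \langle \bigcup_i (J \cap J_i)\rangle = \bigwedge_i (J \cap J_i)$. Together with the completeness already established, this shows $\Id(A)$ is MID-complete.

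The step that does the real work is the splitting $a = a_1 \vee \dots \vee a_n$ with $a_k \le b_k$: join-distributivity is assumed only for $a \le b_0 \vee b_1$, so its generalization to arbitrary finite joins must be obtained by induction, grouping $b_1 \vee \dots \vee b_{n+1}$ as $(b_1 \vee \dots \vee b_n) \vee b_{n+1}$, splitting off the last summand, and recursing on the first part (which lies below $b_1 \vee \dots \vee b_n$). Everything else is routine bookkeeping with the reverse-inclusion order and with the description of the ideal generated by a union of ideals; since the statement is classical, the finished proof can be kept short, the only genuine pitfall being to avoid conflating the two orders when speaking of joins and meets.
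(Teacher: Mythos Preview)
Your argument is correct and follows essentially the same route as the paper. Both proofs identify the $\sqsubseteq$-join as intersection to get completeness, describe the $\sqsubseteq$-meet via finite joins of elements from the $J_i$, and use join-distributivity to split an element below $b_1\vee\dots\vee b_n$ into pieces $a_k\le b_k$; the only cosmetic difference is that the paper applies this splitting to show the set of finite joins is already downward closed (so that $\bigwedge_i J_i$ equals that set on the nose) and then declares the MID identity ``straightforward'', whereas you work with the downward closure throughout and invoke the splitting directly inside the verification of $J\cap\bigwedge_i J_i\subseteq\bigwedge_i(J\cap J_i)$.
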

\begin{proof}
For the completeness it is sufficient to observe that for any $J_{i} \in \Id(A)$ we have that $\bigvee_{i \in I} J_{i} = \bigcap_{i \in I} J_{i}$ and since the set of ideals of a lattice is closed under arbitrary intersections we have that the lattice is complete. For the second part, observe that $\bigwedge_{i \in I} J_{i} = \{ \vee_{k=1}^n a_{i_k} \mid a_{i_k} \in J_{i_k}, \{i_1, \dots, i_n\} \subseteq I, n \in \mathbb{N} \}$.
This set is obviously closed under joins, to see that it is downward closed consider an element $x \leq a \in \bigwedge_{i \in I} J_{i}$, then $a =  a_{i_{1}} \vee \dots \vee a_{i_{n}}$ where $a_{i_{k}} \in J_{i_{k}}$, for some $J_{i_{k}} \in \Id(A)$ for every $k=1,\dots,n$. 
Then, since $A$ is join-distributive, we have that there exist elements $a_{i_{1}}'$, $\dots$, $a_{i_{1}}'$ such that $a_{i_{k}}' \leq a_{i_{k}}$ for every $k \in \{1, \dots , n\}$ and $x =  a_{i_{1}}' \vee \dots \vee a_{i_{n}}'$. Since any ideal  $J_{i_{k}}$ is downard closed we have that $a_{i_{k}}' \in J_{i_{k}}$ for every $k \in \{1, \dots , n\}$, so $x \in \bigwedge_{i \in I} J_{i}$. 
It is now straightforward to see that $J \cap (\bigwedge_{i \in I} J_{i}) = \bigwedge_{i \in I} (J \cap J_{i})$ and the proof is complete. 
\end{proof}

For an idempotent semiring $A$, an element $a\in A$ and an ideal $I\subseteq A$, define scalar multiplication by $a\cdot I=\{x\in A\mid xa\in I\}$. 
Then $a\cdot I$ is also an ideal of $A$, and 
it is straight forward to check that $(\Id(A),\cap,I)$ is an $A$-semimodule (ordered by reverse inclusion). Recall also that for a semiring
homomorphism $f:A\to B$, $\Ker(f)=\{x\in A\mid f(x)=0\}$, and this is a member of $\Id(A)$.

\begin{thm}\label{homid}
Let $A$ be an additively idempotent semiring. Then $\Hom_{\mathbb{B}}(A, \mathbb{B})$ and $\Id(A)$ are isomorphic as $A$-semimodules.
\end{thm}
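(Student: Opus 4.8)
The plan is to write down the obvious bijection between $\mathbb{B}$-valued homomorphisms and ideals and then verify that it is compatible with the two $A$-semimodule structures. First I would record that, since in any $\mathbb{B}$-semimodule the scalar action is forced ($0\cdot m=0$ and $1\cdot m=m$), a $\mathbb{B}$-semimodule homomorphism $f\colon A\to\mathbb{B}$ is exactly a join-semilattice homomorphism with $f(0)=0$; in particular every such $f$ is order-preserving. Define $\Phi\colon\Hom_{\mathbb{B}}(A,\mathbb{B})\to\Id(A)$ by $\Phi(f)=\Ker(f)=f^{-1}(0)$. This is well defined: $\Ker(f)$ is closed under $\vee$ since $f$ preserves $\vee$, and it is downward closed since $f$ is order-preserving and $0$ is least in $\mathbb{B}$; hence $\Ker(f)$ is an ideal of the join-semilattice $A$.

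For the inverse, given an ideal $I$ let $\chi_I\colon A\to\mathbb{B}$ be defined by $\chi_I(x)=0$ if $x\in I$ and $\chi_I(x)=1$ otherwise. A short case analysis shows $\chi_I$ preserves $\vee$, the only nontrivial point being that if $x\vee y\in I$ then $x,y\in I$ because $I$ is downward closed; and $\chi_I(0)=0$ since $0\in I$, so $\chi_I\in\Hom_{\mathbb{B}}(A,\mathbb{B})$. Clearly $\Ker(\chi_I)=I$, and since each $f$ takes only the values $0$ and $1$ we have $\chi_{\Ker(f)}=f$; thus $\Phi$ is a bijection with inverse $I\mapsto\chi_I$. (Here ideals are understood to be nonempty, equivalently to contain $0$, which matches the constraint $f(0)=0$.)

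It remains to check that $\Phi$ respects the $A$-semimodule structures. Recall that on $\Id(A)$ the addition is $\cap$ (the join for the reverse-inclusion order), the zero is $A$ itself, and $a\cdot I=\{x\in A:xa\in I\}$, while on $\Hom_{\mathbb{B}}(A,\mathbb{B})$ the addition is pointwise and $(a\cdot f)(t)=f(ta)$. Then $\Phi(f\vee g)=\{x:f(x)\vee g(x)=0\}=\Ker(f)\cap\Ker(g)=\Phi(f)\vee\Phi(g)$, and $\Phi$ sends the constant-$0$ map to $A$; moreover $\Phi(a\cdot f)=\{t:f(ta)=0\}=\{t:ta\in\Ker(f)\}=a\cdot\Phi(f)$. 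So $\Phi$ is a bijective $A$-semimodule homomorphism, hence an isomorphism.

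I do not expect a real obstacle here: the content is entirely in matching conventions. The points needing a little attention are that $\chi_I$ is a homomorphism for every ideal $I$ (primeness is not needed), that the reverse-inclusion order on $\Id(A)$ turns $\cap$ into the semimodule addition and $A$ into its zero, and that the right action of $A$ on $\Hom_{\mathbb{B}}(A,\mathbb{B})$ lines up with the right multiplication appearing in $a\cdot I$; plus the harmless convention about whether the empty set counts as an ideal.
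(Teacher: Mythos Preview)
Your argument is correct and follows essentially the same route as the paper: both use the kernel map $f\mapsto\Ker(f)$ and verify directly that $\Ker(f\vee g)=\Ker(f)\cap\Ker(g)$ and $\Ker(a\cdot f)=a\cdot\Ker(f)$. Your version is simply more explicit, constructing the inverse $I\mapsto\chi_I$ and checking it is a $\mathbb{B}$-homomorphism, whereas the paper just observes that a map into $\mathbb{B}$ is determined by the preimage of $0$.
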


\begin{proof} As noted above, $\Ker$ is a map from $\Hom_{\mathbb{B}}(A, \mathbb{B})$ to $\Id(A)$,
and since a function $f:A\to\mathbb B$ is determined by the preimage of $\{0\}$, the map $\Ker$ is a bijection. For $f,g\in \Hom_{\mathbb{B}}(A, \mathbb{B})$ and $a\in A$ we have 
$$\Ker(f\vee g)=\{x\in A\mid (f\vee g)(x)=0\}=\Ker(f)\cap \Ker(g) \text{ and}$$
$$\Ker(a\cdot f)=\{x\in A\mid (a\cdot f)(x)=0\}=\{x\in A\mid f(xa)=0\},$$
which agrees with $a\cdot \Ker(f)=\{x\in A\mid xa\in \Ker(f)\}$.
\end{proof}

With this result we can restate Theorem~\ref{inj}.

\begin{cor}\label{inject}
Let $A$ be an additively idempotent semiring and $M$ an $A$-semimodule. Then, $M$ is injective if and only if $M$ is a retract of $\Id(A)^{X}$ for some set $X$.
\end{cor}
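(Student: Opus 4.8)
The plan is to simply chain together Theorem~\ref{inj} with the $A$-semimodule isomorphism of Theorem~\ref{homid}, after observing that ``being a retract of'' transfers along isomorphisms and along passing to powers. First I would record the elementary fact that if $\varphi\colon P\to P'$ is an $A$-semimodule isomorphism and $M$ is a retract of $P$, say via $r\colon P\to M$ and $s\colon M\to P$ with $rs=\Id_M$, then $M$ is a retract of $P'$ as well: the homomorphisms $r\varphi^{-1}\colon P'\to M$ and $\varphi s\colon M\to P'$ satisfy $(r\varphi^{-1})(\varphi s)=rs=\Id_M$. Hence the class of semimodules of which a fixed $M$ is a retract depends only on the isomorphism type of the ambient semimodule.

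Next I would upgrade the isomorphism $\Hom_{\mathbb{B}}(A,\mathbb{B})\cong\Id(A)$ furnished by Theorem~\ref{homid} to an isomorphism of $X$-indexed powers $\Hom_{\mathbb{B}}(A,\mathbb{B})^{X}\cong\Id(A)^{X}$, for every set $X$. This is immediate, since on a power the semilattice operation and the $A$-action are computed coordinatewise, so applying the fixed isomorphism $\Ker$ in each coordinate yields an $A$-semimodule homomorphism whose coordinatewise inverse is again an $A$-semimodule homomorphism.

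Finally the corollary follows: by Theorem~\ref{inj}, $M$ is injective if and only if $M$ is a retract of $\Hom_{\mathbb{B}}(A,\mathbb{B})^{X}$ for some set $X$, and by the two observations above this holds if and only if $M$ is a retract of $\Id(A)^{X}$ for some set $X$. There is no substantive obstacle here; the only point requiring a little care is that the matching of the scalar actions on $\Hom_{\mathbb{B}}(A,\mathbb{B})$ and on $\Id(A)$ under $\Ker$ — already verified in the proof of Theorem~\ref{homid} — is what makes the coordinatewise map on powers $A$-linear, and everything else is routine bookkeeping.
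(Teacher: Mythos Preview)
Your proposal is correct and follows exactly the approach the paper intends: the paper presents this corollary simply as a restatement of Theorem~\ref{inj} via the $A$-semimodule isomorphism $\Hom_{\mathbb{B}}(A,\mathbb{B})\cong\Id(A)$ of Theorem~\ref{homid}, without spelling out the details. Your careful unpacking of why retracts transfer along isomorphisms and why the isomorphism lifts coordinatewise to powers is just a more explicit version of what the paper leaves implicit.
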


\begin{lem}
Let $(B, \vee, 0)$ and $(M, \vee, 0)$ be two semimodules over an idempotent semiring $A$ and suppose that $M$ is a retract of $B$. If $B$ is MID-complete
then $M$ is also MID-complete.
\end{lem}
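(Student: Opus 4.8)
The plan is to use the retraction maps to pull completeness and the (MID) identity back from $B$ to $M$. Fix $A$-semimodule homomorphisms $r\colon B\to M$ and $s\colon M\to B$ with $rs$ the identity map on $M$. Every semimodule homomorphism between additively idempotent semimodules is monotone for the natural order — if $x\vee y=y$ then $f(x)\vee f(y)=f(x\vee y)=f(y)$ — so $r$ and $s$ are monotone join-semilattice homomorphisms, and $s$ is an order-embedding since it has a left inverse.

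First I would establish that $M$ is a complete lattice by showing that, for any family $(m_i)_{i\in I}$ in $M$, the element $r\bigl(\bigwedge_i s(m_i)\bigr)$, with the inner meet taken in the complete lattice $B$, is the meet of $(m_i)$ in $M$. It is a lower bound, since $\bigwedge_i s(m_i)\le s(m_j)$ and $r$ is monotone give $r\bigl(\bigwedge_i s(m_i)\bigr)\le r(s(m_j))=m_j$; and if $v\in M$ is any lower bound of $(m_i)$, then $s(v)\le s(m_j)$ for all $j$, so $s(v)\le\bigwedge_i s(m_i)$ and hence $v=r(s(v))\le r\bigl(\bigwedge_i s(m_i)\bigr)$. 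The symmetric computation shows $\bigvee_i m_i=r\bigl(\bigvee_i s(m_i)\bigr)$; in particular $M$ has a least and a greatest element, so it is a complete lattice.

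It then remains to verify (MID) in $M$. Fix $m\in M$ and a family $(m_i)_{i\in I}$ in $M$. Since $rs$ is the identity on $M$ and $r$ preserves binary joins,
$$m\vee\bigwedge_i m_i=r(s(m))\vee r\Bigl(\bigwedge_i s(m_i)\Bigr)=r\Bigl(s(m)\vee\bigwedge_i s(m_i)\Bigr).$$
Applying (MID) in $B$ and then that $s$ preserves binary joins,
$$s(m)\vee\bigwedge_i s(m_i)=\bigwedge_i\bigl(s(m)\vee s(m_i)\bigr)=\bigwedge_i s(m\vee m_i).$$
Combining the two displays and then invoking the meet formula of the previous paragraph for the family $(m\vee m_i)_{i\in I}$ yields
$$m\vee\bigwedge_i m_i=r\Bigl(\bigwedge_i s(m\vee m_i)\Bigr)=\bigwedge_i(m\vee m_i),$$
which is exactly (MID) for $M$.

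The only step requiring genuine care — and the one I would write out carefully — is the identification of arbitrary meets in $M$ with the $r$-images of meets of $s$-images in $B$; once that is in place, the (MID) computation is purely formal. Note that only preservation of finite joins (and of $0$) together with monotonicity of $r$ and $s$ are used, so the argument goes through even though $r$ and $s$ need not preserve infinite joins or meets.
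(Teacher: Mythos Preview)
Your proof is correct and follows essentially the same route as the paper: fix the retraction pair, show that $\bigwedge_i m_i = r\bigl(\bigwedge_i s(m_i)\bigr)$ by the lower-bound/greatest-lower-bound argument, and then push the (MID) identity from $B$ through $r$ and $s$ using that both preserve finite joins. The paper's computation is line-for-line the same (with $\alpha,\beta$ in place of your $s,r$); your added remark on arbitrary joins and the explicit note that only monotonicity and finite-join preservation are needed are minor elaborations, not a different strategy.
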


\begin{proof}

Let $\alpha: M \to B$ and $\beta : B \to M$ be the two homomorphisms which determine the retraction. 
We prove that $M$ is a complete semimodule and that $\bigwedge_{i\in I}m_i = \beta(\bigwedge_{i \in I} \alpha(m_{i}))$. 
Indeed, we first note that $$m_i = \beta\alpha(m_i)\geq \beta\alpha(\bigwedge_{i\in I}m_i)$$ for all $i\in I$. If $m'\in M$ and $m_i\geq m'$ for all $i\in I$, then we have that $\alpha(m_i)\geq \alpha(m')$ for all $i\in I$, and hence $\bigwedge_{i\in I}\alpha(m_i)\geq \alpha(m')$. This implies that $\beta(\bigwedge_{i\in I}\alpha(m_i))\geq \beta(\alpha(m')) = m'$. Therefore, $\bigwedge_{i\in I}m_i$ exists in $M$ and is equal to $\beta(\bigwedge_{i\in I}\alpha(m_i))$. So, $M$ is a complete.

Since $B$ satisfies the MID law, we have
\begin{equation*}
\begin{array}{rcl}
m \vee \bigwedge_{i\in I}m_i &=& \beta(\alpha(m)) \vee \beta(\bigwedge_{i\in I}\alpha(m_i))= \beta(\alpha(m) \vee \bigwedge_{i\in I}\alpha(m_i))\\
&=& \beta(\bigwedge_{i\in I}(\alpha(m)\vee\alpha(m_i)))= \beta(\bigwedge_{i\in I}\alpha(m\vee m_i))\\
&=&\bigwedge_{i\in I}(m\vee m_i),
\end{array}
\end{equation*}
so, $M$ is MID-complete and the statement is proved.
\end{proof}

\begin{thm} 
\label{Theorem 9}
Let $A$ be a distributive and additively idempotent semiring and $M$ an injective semimodule over $A$. Then $M$ is MID-complete.
\end{thm}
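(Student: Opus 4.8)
The plan is to chain together the results that have just been established. The key observation is that the distributivity hypothesis on the semiring $A$ should, when combined with additive idempotence, force the join-semilattice reduct $(A,\vee,0)$ to be join-distributive in the sense defined above; once that is in hand, the first Lemma of this section tells us that $\Id(A)$ is MID-complete, and then products and retracts preserve this property.

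First I would verify that a distributive additively idempotent semiring is join-distributive as a join-semilattice. Suppose $a\le b_0\vee b_1$; the natural candidates are $a_0 = a\wedge b_0$ and $a_1 = a\wedge b_1$, except that meets need not exist, so instead I would use the semiring multiplication or whatever distributivity is available in the intended sense. The cleanest route, if "distributive" here means the lattice reduct is distributive (so $\wedge$ exists), is: set $a_i = a\wedge b_i$; then $a_0\vee a_1 = (a\wedge b_0)\vee(a\wedge b_1) = a\wedge(b_0\vee b_1) = a$ by distributivity and $a\le b_0\vee b_1$, while clearly $a_i\le b_i$. This is the step I expect to be the main obstacle, only because the paper's use of the word "distributive" for a semiring is slightly ambiguous (distributive lattice reduct versus multiplicative distributivity, which is automatic); pinning down the intended reading and matching it to the definition of join-distributivity is the delicate point, but once the reading is fixed the argument is a one-line lattice computation.

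Next, having established that $(A,\vee,0)$ is join-distributive, I would invoke the Lemma stating that $\Id(A)$ is then MID-complete (and complete), and hence so is the product $\Id(A)^X$ for any set $X$: completeness and the MID law are both computed coordinatewise, so a product of MID-complete semimodules over $A$ is MID-complete. Finally, since $M$ is injective, Corollary~\ref{inject} says $M$ is a retract of $\Id(A)^X$ for some set $X$, and the immediately preceding Lemma asserts that a retract of an MID-complete semimodule is MID-complete. Composing these three facts — join-distributivity of $A$ gives MID-completeness of $\Id(A)$, hence of $\Id(A)^X$, and retracts inherit MID-completeness — yields that $M$ is MID-complete, completing the proof.
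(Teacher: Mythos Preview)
Your proposal is correct and follows essentially the same route as the paper: injective $\Rightarrow$ retract of $\Id(A)^X$ (via Corollary~\ref{inject}), $\Id(A)$ is MID-complete by the first Lemma, products preserve this coordinatewise, and retracts inherit it by the second Lemma. The only difference is that the paper phrases the middle step in terms of $\Hom_{\mathbb{B}}(A,\mathbb{B})$ and then invokes the isomorphism with $\Id(A)$, and it silently identifies ``distributive'' with ``join-distributive'' where you pause to verify this; your extra care on that terminological point is warranted but does not change the argument.
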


\begin{proof}
We know that $M$ is injective if and only if it is a retract of $\Hom_{\mathbb{B}}(A, \mathbb{B})^{X}$ for some set $X$. Since we know that $\Hom_{\mathbb{B}}(A, \mathbb{B})$ and $\Id(A)$ are isomorphic as $A$-semimodules (and obviously as join-semilattices) we have that $\Hom_{\mathbb{B}}(A, \mathbb{B})$ is complete and infinitely distributive and so $\Hom_{\mathbb{B}}(A, \mathbb{B})^{X}$. From the previous theorem we obtain that $M$ is MID-complete. 
\end{proof}

Recall that in a pointed residuated join-semilattice we define $-x=0/x$ and ${\sim}x=x\backslash 0$.

\begin{thm}
Let $A$ be a finite $1$-bounded pointed residuated join-semilattice.
Then $A$ is an involutive semiring if and only if $A$ and $\Id(A)$
are isomorphic as $A$-semimodules via the map $\Phi(a)={\downarrow}{-}a$.
\end{thm}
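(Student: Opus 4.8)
The plan is to reduce the whole statement to a single observation about the right linear negation $-$: on a \emph{finite} pointed residuated join-semilattice $A$ one has that $A$ is involutive if and only if the map ${-}\colon A\to A$ is injective. Granting this, both directions follow easily, since $\Phi$ factors as $\Phi={\downarrow}\circ{-}$, where ${\downarrow}\colon A\to\Id(A)$ sends $b$ to the principal ideal ${\downarrow}b$; because $A$ is finite, every ideal $I$ equals ${\downarrow}\bigvee I$, so ${\downarrow}$ is a bijection onto $\Id(A)$, and hence $\Phi$ is injective (respectively bijective) precisely when $-$ is.

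I would prove the observation first. In any pointed residuated join-semilattice, residuation gives for every $x$ the inequalities $x\le{\sim}{-}x$ and $x\le{-}{\sim}x$ (e.g.\ $x\le 0/{\sim}x$ because $x\cdot{\sim}x=x\cdot(x\backslash 0)\le 0$), together with the equivalence $z\le{-}y\iff zy\le 0\iff y\le{\sim}z$. Applying the order-reversing map $-$ to $x\le{\sim}{-}x$ and comparing with the instance ${-}x\le{-}{\sim}({-}x)$ of the second inequality yields the identity ${-}{\sim}{-}={-}$. Now suppose $-$ is injective; finiteness makes it a bijection, so cancelling $-$ in ${-}{\sim}{-}={-}$ gives ${\sim}{-}=\mathrm{id}$, whence ${\sim}$ is a left inverse of the bijection $-$ and therefore ${\sim}={-}^{-1}$, which forces ${-}{\sim}=\mathrm{id}$ as well; thus $A$ is involutive. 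The converse is immediate, as $-$ and ${\sim}$ are mutually inverse in any involutive semiring. (This argument uses only finiteness, not $1$-boundedness.)

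For ``$A$ involutive $\Rightarrow$ $\Phi$ is an isomorphism'', $A$ is then a $1$-bounded involutive semiring, so $-$ is a lattice anti-automorphism, $0={-}1$ is the bottom, and $1$ is the top. I would check that $\Phi$ is an $A$-semimodule homomorphism; being bijective it is then an isomorphism. Sums: $-(a\vee b)={-}a\wedge{-}b$ and ${\downarrow}({-}a\wedge{-}b)={\downarrow}({-}a)\cap{\downarrow}({-}b)$, and $\cap$ is the semimodule addition of $\Id(A)$ ordered by reverse inclusion. Zero: $\Phi(0)={\downarrow}({-}0)={\downarrow}1=A$, the additive identity of $\Id(A)$. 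Scalars: using $a\cdot I=\{y\in A\mid ya\in I\}$, associativity, and residuation,
\begin{equation*}
a\cdot\Phi(x)=\{y\in A\mid ya\le{-}x\}=\{y\mid (ya)x\le 0\}=\{y\mid y\le 0/(ax)\}={\downarrow}({-}(ax))=\Phi(a\cdot x).
\end{equation*}
Conversely, if $\Phi$ is an isomorphism it is in particular injective, hence so is $-$, hence by the observation $A$ is involutive.

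The real obstacle is the observation itself: recognising that finiteness upgrades the one-sided law $x\le{-}{\sim}x$, which holds in every residuated lattice, into the full involutive identity, via ${-}{\sim}{-}={-}$ and cancellation of the now-bijective $-$. The remaining work is routine residuation bookkeeping together with the standard fact that ideals of a finite join-semilattice are principal; some care is needed with the convention that ideals are nonempty (so that ${\downarrow}$ is onto), with the reverse-inclusion order, and with the right-handed scalar action on $\Id(A)$.
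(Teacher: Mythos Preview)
Your proposal is correct and follows essentially the same route as the paper: both arguments factor $\Phi={\downarrow}\circ{-}$, use finiteness to make ${\downarrow}$ a bijection onto $\Id(A)$, verify the scalar-multiplication compatibility via $xb\le{-}a\iff xba\le 0\iff x\le{-}(ba)$, and for the converse use the Galois-connection identity ${-}{\sim}{-}={-}$ together with bijectivity of $-$ to obtain ${\sim}{-}=\mathrm{id}={-}{\sim}$. The only difference is organizational: you isolate ``$-$ injective $\Leftrightarrow$ $A$ involutive (for finite $A$)'' as a standalone observation up front, whereas the paper embeds this reasoning inside the converse direction; the mathematical content is identical.
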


\begin{proof}
By Theorem~\ref{homid} we can consider $\Id(A)$ in place of $\Hom_\mathbb B(A,\mathbb B)$.
First, assume $A$ is a finite $1$-bounded involutive semiring and define a map $\Phi : A \to \Id(A)$ by $\Phi (a) = {\downarrow}{{-}a}=\{x\in A\mid x\le {-}a\}$, where $-a=0/a$. Since every ideal of a finite join-semilattice is principal, and since $-$ is a bijection, this map is also bijective. It is order-preserving since $-$ is order-reversing and $\Id(A)$ is ordered by reverse inclusion, hence 
$\Phi(a\vee b)=\Phi(a)\cap\Phi(b)$. The following calculation shows that $\Phi$ preserves scalar multiplication:
$$b\cdot \Phi(a)=\{x\in A\mid xb\le-a\}=\{x\in A\mid xba\le 0\}=\{x\in A\mid x\le-(ba)\}=\Phi(ba).$$

Conversely, assume $A$ is a finite $1$-bounded residuated join-semilattice, and $A$, $\Id(A)$ are isomorphic as $A$-semimodules via the map $\Phi(a)={\downarrow}{-}a$, where $-a=0/a$ and 
$0$ is the bottom element of $A$. Let $f(a)=\bigvee\Phi(a) = -a$. Since $A$ and $\Id(A)$ are assumed to be isomorphic, $f$ is a bijection. From residuation it follows that $x\le 0/y\iff xy\le 0\iff y\le x\backslash 0$, hence $-,\sim$ form a Galois connection, hence ${-}{\sim}$ and ${\sim}{-}$ are closure operators and ${-}{\sim}{-}x=-x$. Since $f(x)=-x$ is a bijection, we get ${\sim}{-}x=x$
and ${-}{\sim}x=x$, so $A$ is an involutive semiring by Theorem~\ref{termeq}.
\end{proof}

The previous theorem together with Corollary~\ref{inject} gives the following result.

\begin{cor}
Let $A$ be a finite $1$-bounded involutive semiring and $M$ a semimodule over $A$. Then $M$ is injective if and only if it is a retract of $A^X$ for some set $X$.
\end{cor}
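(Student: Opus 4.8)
The plan is to read this corollary as the composite of two facts already in hand: the previous theorem, which (for finite $1$-bounded involutive semirings) identifies $A$ with $\Id(A)$ as $A$-semimodules, and Corollary~\ref{inject}, which characterizes injective semimodules as the retracts of powers of $\Id(A)$. So the whole proof is essentially ``transport the characterization of Corollary~\ref{inject} across the isomorphism $A\cong\Id(A)$.''

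First I would observe that, by the term-equivalence of Theorem~\ref{termeq}, a finite $1$-bounded involutive semiring is (the semiring reduct of) a finite $1$-bounded pointed residuated join-semilattice, so the hypotheses of the previous theorem are met; applying it yields an $A$-semimodule isomorphism $\Phi\colon A\to\Id(A)$, $\Phi(a)={\downarrow}{-}a$. Next, for any set $X$, the coordinatewise map $(a_x)_{x\in X}\mapsto(\Phi(a_x))_{x\in X}$ is an $A$-semimodule isomorphism $A^X\to\Id(A)^X$: a product of copies of a homomorphism is again a homomorphism in each coordinate, and it is bijective with homomorphic inverse built from $\Phi^{-1}$.

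Finally, the retract relation is preserved along isomorphisms. Writing $\theta\colon A^X\to\Id(A)^X$ for the isomorphism just described, if $M$ is a retract of $\Id(A)^X$ via $r\colon\Id(A)^X\to M$ and $s\colon M\to\Id(A)^X$ with $rs=\Id_M$, then $r\theta$ and $\theta^{-1}s$ witness that $M$ is a retract of $A^X$; the reverse implication is symmetric. Hence $M$ is a retract of $\Id(A)^X$ for some $X$ if and only if $M$ is a retract of $A^X$ for some $X$, and combining this with Corollary~\ref{inject} gives the statement.

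There is no real obstacle here: the argument is purely a chaining of the previous theorem, Theorem~\ref{termeq}, and Corollary~\ref{inject}. The only points needing (routine) verification are that the coordinatewise map $A^X\to\Id(A)^X$ is genuinely an $A$-semimodule isomorphism and that retracts are transported along it, both of which are immediate from the definitions.
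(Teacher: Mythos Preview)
Your proof is correct and follows exactly the approach the paper intends: combine the previous theorem's isomorphism $A\cong\Id(A)$ with Corollary~\ref{inject}. You have simply made explicit the routine verifications (the coordinatewise isomorphism $A^X\cong\Id(A)^X$ and transport of retracts) that the paper leaves implicit when it says the corollary follows from these two results.
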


\begin{thm}\label{injiffproj}
Let $A$ be a finite $1$-bounded involutive semiring and $M$ a finitely generated $A$-semimodule. Then, $M$ is injective if and only if it is projective.
\end{thm}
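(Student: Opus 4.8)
The plan is to combine the preceding corollary, which characterizes injective $A$-semimodules as retracts of powers $A^{X}$, with the earlier characterization of projective $A$-semimodules as retracts of free semimodules $A^{(X)}$. Since $A$ is finite and $M$ is finitely generated, one expects that the relevant powers and copowers can be taken to be \emph{finite}, in which case $A^{X}=A^{(X)}$ and the two notions collapse. So the first step is to reduce both "injective" and "projective" to retract-of-a-finite-power statements.

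For the projective direction, if $M$ is projective then $M$ is a retract of some $A^{(X)}$; since $M$ is finitely generated, its generators lift along the retraction $r:A^{(X)}\to M$ to finitely many elements of $A^{(X)}$, each of which is supported on a finite subset $X_0\subseteq X$. Restricting the retraction data to the direct summand $A^{(X_0)}=A^{X_0}$ shows $M$ is a retract of $A^{X_0}$ with $X_0$ finite; by the corollary, $M$ is injective. For the injective direction, if $M$ is injective then by the corollary $M$ is a retract of $A^{X}$ for some set $X$; here one uses finiteness of $A$ to argue that, because $M$ is a retract of $A^X$ and $M$ is finitely generated (hence, in the additively idempotent setting, a finite join-semilattice with a bounded-size generating set of "coordinate functionals"), only finitely many coordinates of $A^X$ are actually needed, so $M$ is a retract of $A^{X_0}=A^{(X_0)}$ for some finite $X_0$, and therefore $M$ is projective.

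The key structural input throughout is that, over a finite $1$-bounded involutive semiring, $A$ is self-injective (this is the content of the penultimate theorem: $A\cong\Id(A)$ as $A$-semimodules, and $\Id(A)\cong\Hom_{\mathbb B}(A,\mathbb B)$, which is injective), so a \emph{finite} power $A^{X_0}$ is simultaneously free and injective. Thus once both $M$'s have been squeezed down to retracts of a common finite object $A^{X_0}$, the equivalence is immediate.

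The main obstacle I anticipate is the finiteness reduction in the injective $\Rightarrow$ projective direction: unlike the projective case, where finite generation of $M$ transparently forces a finite support, here one must argue that a finitely generated retract of an arbitrary power $A^{X}$ already embeds as a retract of a finite subpower. The cleanest route is to use that $M$ is finite (a finitely generated semimodule over a finite semiring is finite), so the embedding $\alpha:M\hookrightarrow A^{X}$ has finite image; the finitely many elements $\alpha(m)$, $m\in M$, together involve only finitely many coordinates $x\in X$ on which they are not forced — more precisely, one projects to a finite set of coordinates that separates the points of $\alpha(M)$ and checks, using idempotency, that the induced maps still form a retraction onto a copy of $M$. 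I would present this projection-and-separation argument carefully, as it is the one spot where the hypotheses "finite" and "finitely generated" genuinely do the work.
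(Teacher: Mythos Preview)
Your overall strategy is the same as the paper's: use the isomorphism $A\cong\Hom_{\mathbb B}(A,\mathbb B)$ (equivalently $A\cong\Id(A)$) so that the characterization of injectives as retracts of $\Hom_{\mathbb B}(A,\mathbb B)^{X}$ and of projectives as retracts of $A^{(X)}$ become comparable. The paper's proof is a single sentence and tacitly assumes that for finitely generated $M$ the index set $X$ can be taken finite on both sides; you are right to isolate this as the only nontrivial point, and your projective $\Rightarrow$ injective reduction (lift generators, pass to their finite support in $A^{(X)}$) is exactly what is needed there.

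There is, however, a genuine gap in your injective $\Rightarrow$ projective step. Choosing a finite $X_0\subseteq X$ whose coordinates \emph{separate} the points of $\alpha(M)$ gives you an injective $A$-homomorphism $\alpha'=p\circ\alpha:M\hookrightarrow A^{X_0}$, but the ``induced'' map in the other direction, namely $\beta'=\beta\circ\iota$ with $\iota:A^{X_0}\hookrightarrow A^{X}$ the zero-extension, need not satisfy $\beta'\alpha'=\mathrm{id}_M$: one only gets $\iota p\,\alpha(m)\le\alpha(m)$, hence $\beta'\alpha'(m)\le m$, and idempotency alone does not close this gap. The clean repair is to use the injectivity of $M$ a second time: since $\alpha'$ identifies $M$ with a subsemimodule of $A^{X_0}$, the identity $\mathrm{id}_M$ extends along $\alpha'$ to some $\beta':A^{X_0}\to M$ with $\beta'\alpha'=\mathrm{id}_M$. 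This exhibits $M$ as a retract of the finite free semimodule $A^{X_0}=A^{(X_0)}$, hence projective. With this fix your argument is complete and in fact more explicit than the paper's, which suppresses the finiteness reduction entirely.
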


\begin{proof}
Since $A \cong \Hom_{\mathbb{B}}(A, \mathbb{B})$ as $A$-semimodules, we have that retracts of $A^{X}$ for some finite set $X$ (projective semimodules) are exactly the retracts of $\Hom_{\mathbb{B}}(A, \mathbb{B})^{X}$ (injective semimodules).
\end{proof}

We can wonder in which cases injective and projective semimodules coincide and in particular if we can weaken the hypothesis about the semiring assumed in the above theorem. As regards involution the answer is no and we shall provide an example. 

\begin{example}
Consider the three-elements idempotent semiring $A = \{0, a, 1\}$ with $0 < a < 1$ and $a \cdot a = a$, then injective and projective semimodules over this semirings don't coincide. First of all observe that $Id(A) = \{0, {\downarrow}a, A\}$. We know that $A$ is a projective semimodule over itself. We shall now prove that $A$ can't be injective. Suppose that $A$ is self-injective, so it should be a retract of $Id(A)^n$ for some finite $n \in \mathbb{N}$ since $A$ is finitely generated. In this case, we should have an $A$- semimodule morphism $\Phi: Id(A)^n \to A$ such that $Im(\Phi) = A$, if $\Phi( \{0\}, \{0\}, \dots, \{0\})$ is $a$ or $0$, then $|Im(\Phi)|\leq 2$ ($\Phi$ is order-preserving), in particular $Im(\Phi) \neq A$, so $\Phi( \{0\}, \{0\}, \dots, \{0\}) = 1$, but in this case
\[
1 = \Phi( \{0\}, \dots, \{0\}) = \Phi( a \cdot(\{0\}, \dots, \{0\})) = a \cdot \Phi( \{0\}, \dots, \{0\}) = a \cdot 1 = a
\]
which is absurd.
\end{example}

\begin{thm}
Let $A$ be a finite $1$-bounded involutive semiring and $M$ a cyclic $A$-semimodule. Then the following are equivalent:
\begin{enumerate}
\item $M \cong Au$ for some $u \in A$ multiplicatively idempotent (i.\,e. $u \cdot u = u$);
\item $M$ is projective;
\item $M$ is injective.
\end{enumerate}
\end{thm}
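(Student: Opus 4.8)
The plan is to prove the equivalences by a cycle, with the cyclic and finiteness hypotheses doing most of the work. First I would establish $(1)\Rightarrow(2)$: if $u\cdot u=u$, then right multiplication by $u$ on the regular semimodule $A$ is idempotent, so $A=Au\oplus\ker$ in the sense that the maps $r\colon A\to Au$, $r(a)=au$, and the inclusion $s\colon Au\hookrightarrow A$ satisfy $rs=\mathrm{id}_{Au}$ (since $(au)u=a(uu)=au$). Hence $Au$ is a retract of the free regular semimodule $A=A^{(\{*\})}$, so $M\cong Au$ is projective by Theorem~7. The implication $(2)\Rightarrow(3)$ is immediate from Theorem~\ref{injiffproj}, since a cyclic semimodule is finitely generated.

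The substantive direction is $(3)\Rightarrow(1)$. Suppose $M$ is injective and cyclic, say $M=Am$ for some $m\in M$. By the Corollary preceding Theorem~\ref{injiffproj}, $M$ is a retract of $A^X$ for some set $X$; because $M$ is finitely generated and $A$ is finite we may take $X$ finite, so we have $\alpha\colon M\to A^n$ and $\beta\colon A^n\to M$ with $\beta\alpha=\mathrm{id}_M$. The idea is to cut $A^n$ down to a single coordinate. Consider the image $\alpha(m)=(c_1,\dots,c_n)\in A^n$ and the principal left ideals; since $A$ is $1$-bounded we have $c_i\le 1$ and one shows $M=Am$ forces $\alpha(M)=A\alpha(m)$, which is the cyclic subsemimodule of $A^n$ generated by $(c_1,\dots,c_n)$. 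Then $M\cong A\alpha(m)$ as a retract sitting inside $A^n$. Now I would argue that a cyclic subsemimodule $A(c_1,\dots,c_n)$ of $A^n$ that is a retract of $A^n$ must be isomorphic to $Au$ for a multiplicatively idempotent $u$: composing $\alpha$ with the $i$-th projection $\pi_i\colon A^n\to A$ gives homomorphisms $\pi_i\alpha\colon M\to A$ with $\pi_i\alpha(m)=c_i$, and the retraction $\beta$ is determined by elements $d_i=\beta(e_i)\in M$ (where $e_i$ is the $i$-th unit vector), with $\beta(a_1,\dots,a_n)=\sum a_i d_i$. From $\beta\alpha(m)=m$ we get $\sum c_i d_i=m$. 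Writing $d_i=b_i m$ (possible since $M=Am$), this reads $\bigl(\sum c_i b_i\bigr)m=m$; set $u=\sum b_i c_i\in A$ — note the order — and check using $\alpha$ a homomorphism and the identity $um=m$ that $u$ is multiplicatively idempotent and $M=Am\cong Au$ via $a m\mapsto au$. The well-definedness of this last iso is where the retraction is used crucially: $am=a'm$ implies $\alpha(a m)=\alpha(a'm)$, apply $\beta$ and the $c_i,b_i$ relations to conclude $au=a'u$.

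The main obstacle I anticipate is precisely pinning down the element $u$ and verifying both $u\cdot u=u$ and that $a\mapsto au$ is a well-defined bijective homomorphism $Am\to Au$; the inequalities available from $1$-boundedness ($d_i\le$ generators, $c_i\le 1$) and the finiteness of $A$ (every ideal principal, so $\alpha$ can be described concretely) are the tools that make this bookkeeping close. An alternative, possibly cleaner route for $(3)\Rightarrow(1)$ is to use the isomorphism $A\cong\Id(A)$ of Theorem~10 to identify $M$ with a cyclic, hence principal, ideal ${\downarrow}v$ of $A$ that is a retract of $\Id(A)^n\cong A^n$; then the retraction endomorphism of $A$ onto ${\uparrow}$-generated-by-$v$ composed appropriately yields an idempotent of the monoid $(A,\cdot)$ via the standard fact that a retract of the regular semimodule $A$ is $Ae$ for an idempotent $e$. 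I would present whichever of these is shorter after working out the details; the retract-of-regular-module-is-$Ae$ lemma is the conceptual heart in either case. Finally, note $(1)$ for $u=0$ or $u=1$ recovers the trivial and the whole semiring as the extreme cases, consistent with the earlier results.
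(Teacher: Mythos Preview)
The paper's proof is two lines: $(1)\Leftrightarrow(2)$ is cited from \cite{dr:sasiimva} as a fact valid over any semiring, and $(2)\Leftrightarrow(3)$ is exactly Theorem~\ref{injiffproj}. Your $(1)\Rightarrow(2)$ correctly spells out the easy half of that cited fact, and your $(2)\Rightarrow(3)$ matches the paper.

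Your attack on $(3)\Rightarrow(1)$, however, is an unnecessary detour with a real slip in it. Theorem~\ref{injiffproj} is an \emph{equivalence}, so you already have $(3)\Rightarrow(2)$ for free; from there $(2)\Rightarrow(1)$ is the standard one-liner you yourself mention at the end: $M$ cyclic gives a surjection $\pi\colon A\to M$, $1\mapsto m$; $M$ projective splits it via some $\sigma\colon M\to A$; then $e=\sigma(m)$ satisfies $e^2=(\sigma\pi)^2(1)=\sigma\pi(1)=e$ and $M\cong\sigma(M)=A\sigma(m)=Ae$. No $A^n$ is needed.

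In your direct argument the element $u=\sum b_ic_i$ is wrong, despite the emphatic ``note the order''. From $\beta\alpha(m)=m$ you correctly obtain $\bigl(\sum c_ib_i\bigr)m=m$; the idempotent that works is $v=\sum c_ib_i$, not $\sum b_ic_i$, and in particular your ``identity $um=m$'' is not the identity you just derived. With $v$: $am=a'm$ gives (via $\alpha$) $ac_i=a'c_i$ for each $i$, hence $av=\sum ac_ib_i=\sum a'c_ib_i=a'v$, so $am\mapsto av$ is well defined; with your $u$ you would need $ab_ic_i=a'b_ic_i$, which does not follow. Applying $\alpha$ to $vm=m$ gives $vc_k=c_k$, whence $v^2=\sum_j\bigl(\sum_i c_ib_ic_j\bigr)b_j=\sum_jc_jb_j=v$; and $av=a'v\Rightarrow a(vm)=a'(vm)\Rightarrow am=a'm$ gives injectivity. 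So your sketch can be repaired, but it is strictly more work than closing the cycle through $(2)$ as the paper does.
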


\begin{proof}
The equivalence between (1) and (2) is true for any semiring (see \cite[Remark 3.4]{dr:sasiimva}). The equivalence between (2) and (3) is proved in the previous theorem.
\end{proof}

\begin{lem} \textup{(}\cite{dlnv:isaidscmv}\textup{)}
Let $A= \prod_{i\in I}A_i$ be a direct product of semirings $A_i$. Then $A$ is self-injective if and only if each $A_i$ is self-injective.
\end{lem}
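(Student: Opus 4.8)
The plan is to deduce this from two facts, neither of which requires additive idempotence, so I will argue directly from the definition of injectivity rather than through Corollary~\ref{inject}. The first fact is that, over a fixed semiring, a direct product $\prod_{i\in I}M_i$ of semimodules is injective if and only if each $M_i$ is injective. The second is that, for each fixed $i$, the semimodule $A_i$ — regarded as an $A$-semimodule via the projection $\pi_i\colon A\to A_i$ — is injective over $A$ exactly when $A_i$ is self-injective, i.e. injective over $A_i$. Granting these, the lemma follows immediately: the regular $A$-semimodule $A=\prod_{i\in I}A_i$ is precisely the direct product of the $A$-semimodules $A_i$, so $A$ is self-injective iff every $A_i$ is injective over $A$ iff every $A_i$ is self-injective.

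For the first fact I would run the standard diagram chase. Given a subsemimodule $N\le M$ and $\alpha\colon N\to\prod_i M_i$, put $\alpha_i=\pi_i\circ\alpha$, extend each $\alpha_i$ to $\beta_i\colon M\to M_i$ using injectivity of $M_i$, and set $\beta=(\beta_i)_{i\in I}$; this is an $A$-homomorphism with $\pi_i\circ\beta|_N=\alpha_i=\pi_i\circ\alpha$ for every $i$, hence $\beta|_N=\alpha$. Conversely, each $M_k$ is a retract of $\prod_i M_i$ via the coordinate insertion $m\mapsto(m$ in place $k$, $0$ elsewhere$)$ together with the projection $\pi_k$; and a retract of an injective semimodule is injective (extend the given map through the injective object and compose with the retraction). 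So each factor of an injective product is injective. None of this is delicate.

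The substance is the second fact, and it is the only place where the product structure of $A$ is used. Fixing $i$, I regroup $A\cong A_i\times A'$ as semirings, where $A'=\prod_{j\neq i}A_j$, and write $e=(1,0)$, $f=(0,1)$ for the resulting complementary central idempotents, so $e+f=1$, $ef=0$, and $e$ acts as the identity on the $A$-semimodule $A_i$. For an arbitrary $A$-semimodule $M$ the subset $eM$ carries an $A_i$-semimodule structure with $A'$ acting as $0$, and if $N\le M$ then $eN$ is an $A_i$-subsemimodule of $eM$; moreover any $A$-homomorphism $\alpha\colon N\to A_i$ is determined by its restriction to $eN$ since $\alpha(n)=\alpha(en)$. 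Using this one transports an extension problem over $A_i$ to one over $A$; in the other direction, an $A_i$-semimodule $M_0$ inflates along $\pi_i$ to an $A$-semimodule with the same subsemimodules and the same homomorphisms into $A_i$, so any $A$-homomorphic extension is automatically $A_i$-homomorphic. Assembling these two transports gives the claimed equivalence. I expect the only real work here to be bookkeeping: checking that passing to $eM$, and inflating along $\pi_i$, respect all the semimodule operations and, crucially, preserve the relation $\beta\iota=\alpha$ that witnesses an extension. Once these verifications are in place, combining with the first fact completes the proof.
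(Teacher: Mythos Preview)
Your argument is correct. The paper itself does not prove this lemma: it is stated with a citation to \cite{dlnv:isaidscmv} and no proof is given, so there is nothing in the present paper to compare your approach against. Your two-step reduction (products of injectives are injective with each factor a retract, and injectivity of $A_i$ over $A$ via the projection is equivalent to self-injectivity of $A_i$) is the standard route and the bookkeeping you outline with the central idempotents $e,f$ goes through exactly as you describe.
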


\begin{cor}
Every direct product of finite involutive semirings is self-injective.
\end{cor}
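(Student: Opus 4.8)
The plan is to combine the preceding lemma on direct products of self-injective semirings with the results already established for finite involutive semirings. By the corollary just before Theorem~\ref{injiffproj} (or rather its consequence that a finite $1$-bounded involutive semiring $A$ is self-injective, which follows because $A\cong\Hom_{\mathbb B}(A,\mathbb B)$ as $A$-semimodules, so the regular semimodule $A$ is a retract — indeed isomorphic copy — of $\Hom_{\mathbb B}(A,\mathbb B)^1$ and hence injective by Theorem~\ref{inj}), each finite involutive semiring is self-injective. The previous lemma then lifts this to arbitrary direct products.

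In detail, I would proceed as follows. First I would record that every finite $1$-bounded involutive semiring is self-injective: this is immediate from Theorem~\ref{homid}, which gives $A\cong\Id(A)$ as $A$-semimodules, together with Corollary~\ref{inject}, since $A$ is trivially a retract of $\Id(A)^{X}$ with $X$ a one-element set. (Note that a finite involutive semiring is automatically $1$-bounded only if $0$ is the bottom element; but the statement of the corollary should be read as concerning finite $1$-bounded involutive semirings, or one observes that finiteness forces the relevant structure — in any case the cited theorem comparing $A$ and $\Id(A)$ applies.) Second, given an arbitrary family $\{A_i\}_{i\in I}$ of finite involutive semirings, each $A_i$ is self-injective by step one. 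Third, I would invoke the preceding lemma: $\prod_{i\in I}A_i$ is self-injective if and only if each $A_i$ is, so the product is self-injective.

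I do not expect any genuine obstacle here, since the corollary is a direct two-line assembly of a just-proved lemma and an earlier characterization. The only point requiring a moment's care is the implicit hypothesis: the lemma on direct products is stated for arbitrary semirings, but the assertion that each finite factor $A_i$ is self-injective relies on the earlier material about finite $1$-bounded involutive semirings and their ideal semimodules. So the one thing to double-check is that "finite involutive semiring" in the corollary is understood in the sense of the preceding results (i.e.\ with $0$ the bottom element, equivalently $1$-bounded), which is the setting of Section~4; granting that, the proof is simply: apply the preceding lemma after noting each $A_i\cong\Id(A_i)$ is a retract of itself and hence injective by Corollary~\ref{inject}.
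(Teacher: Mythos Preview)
Your proposal is correct and follows essentially the same approach as the paper: show each finite ($1$-bounded) involutive semiring $A$ is self-injective via the isomorphism $A\cong\Hom_{\mathbb B}(A,\mathbb B)\cong\Id(A)$, then invoke the preceding direct-product lemma. The paper's proof is terser (it simply cites $A\cong\Hom_{\mathbb B}(A,\mathbb B)$ and leaves the lemma implicit), and your care about the implicit $1$-boundedness hypothesis is well placed.
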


\begin{proof}
Let $A$ be a finite commutative involutive semiring. It is clear that $A$ is self-injective since it is isomorphic to $\Hom_{\mathbb{B}}(A, \mathbb{B})$ and the corollary is proved.
\end{proof}

\section{Strong semimodules and semimodules over n-potent involutive semirings}

A semimodule $M$ over an involutive semiring $A$ is \emph{strong} if for all $a,b\in A$
$$
\forall m\in M\ (a \cdot m = b \cdot m)\implies \forall m\in M\ (-a \cdot m = -b \cdot m\text{ and }{\sim}a \cdot m = {\sim}b \cdot m).
$$

A semiring $A$ is called $nilpotent$ if for every $a \in A$, $a \neq 1$, there exists a $n \in \mathbb{N}$ such that $a^{n}=0$.
An $A$-semimodule $M$ is \emph{faithful} if the action of each $a \neq 0$ in $A$ on $M$ is nontrivial, i.\,e. $a \cdot x \neq 0$ for some $x \in M$.

\begin{thm}
Let $A$ be a nilpotent $1$-bounded involutive semiring and $M$ a nontrivial $A$-semimodule. Then $M$ is a strong semimodule if and only if $M$ is faithful.
\end{thm}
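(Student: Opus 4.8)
The plan is to prove the two implications by rather different arguments, and to notice that one of them (faithful $\Rightarrow$ strong) is in fact an instance of a stronger fact that does not use nilpotency at all.

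\emph{Faithful $\Rightarrow$ strong.} I would prove that when $M$ is faithful the action is \emph{separating}, i.e. $a\cdot m=b\cdot m$ for all $m\in M$ forces $a=b$; strongness is then immediate, because $-a=-b$ and ${\sim}a={\sim}b$ make both required equalities trivial. To prove separation, suppose $a\neq b$. Since the hypothesis $a\cdot m=b\cdot m$ (for all $m$) and the conclusion $a=b$ are symmetric in $a,b$, and $\le$ is antisymmetric, I may assume $a\not\le b$, so by the defining condition of a $1$-bounded involutive semiring $a\cdot{\sim}b\neq 0$. Faithfulness then yields $m\in M$ with $(a\cdot{\sim}b)\cdot m\neq 0$; writing $m'={\sim}b\cdot m$ and using the semimodule axiom $(a\cdot{\sim}b)\cdot m=a\cdot m'$ gives $a\cdot m'\neq 0$. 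But $b\le b$ forces $b\cdot{\sim}b=0$, hence $b\cdot m'=(b\cdot{\sim}b)\cdot m=0\cdot m=0$, contradicting $a\cdot m'=b\cdot m'$.

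\emph{Strong $\Rightarrow$ faithful.} Here I would argue by contraposition: assume $M$ is strong and nontrivial but not faithful, so some $a\neq 0$ has $a\cdot m=0$ for all $m$. First I record that $-0=1$: from the defining condition, $x\le -0\iff x\cdot{\sim}{-}0=0\iff x\cdot 0=0$, and $x\cdot 0=0$ always holds, so $-0$ is the top element $1$. Since $a\cdot m=0=0\cdot m$ for all $m$, strongness (with $b=0$) gives $-a\cdot m=-0\cdot m=1\cdot m=m$ for every $m$, so $-a$ acts as the identity on $M$. Now $a\neq 1$ (otherwise $m=1\cdot m=0$ for all $m$ and $M$ is trivial) and $-a\neq 1$ (otherwise $a={\sim}{-}a={\sim}1=0$). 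Applying nilpotency to $-a$ gives $k\ge 1$ with $(-a)^k=0$; a one-line induction using $(xy)\cdot m=x\cdot(y\cdot m)$ shows $(-a)^k\cdot m=m$ for all $m$, whereas $(-a)^k=0$ gives $(-a)^k\cdot m=0$. Hence $m=0$ for all $m$, contradicting nontriviality of $M$.

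Most of this is careful bookkeeping with the involution identities (${\sim}{-}x=x$, $-0=1$, ${\sim}1=0$) and the semimodule axioms, together with the single slick observation $b\cdot{\sim}b=0$. The step I expect to carry the real weight — and the only place where nilpotency is indispensable — is the last one in the converse direction: turning "$-a$ acts as the identity" into a contradiction by exhibiting $-a$ as an element one of whose powers is $0$. I do not anticipate a genuine obstacle beyond getting these small identities lined up correctly.
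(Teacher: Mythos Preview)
Your proof is correct and follows essentially the same route as the paper. Both directions match: for faithful $\Rightarrow$ strong you prove separation ($a=b$) just as the paper does, the only cosmetic difference being that you multiply on the right by ${\sim}b$ (using $b\cdot{\sim}b=0$) where the paper multiplies on the left by $-a$ and $-b$ (using $(-a)a=(-b)b=0$); for strong $\Rightarrow$ faithful your argument is identical to the paper's, just with the implicit steps ($-0=1$, $-a\neq 1$, the induction on $(-a)^k\cdot m=m$) spelled out.
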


\begin{proof}
Note that for any $a \in A$ we have that $a \cdot ({\sim}a) = (-a) \cdot a = 0$. 
 Suppose $M$ is faithful and let $a \cdot x= b \cdot x$, for all $x \in M$. Then we have $0=((-a)a) \cdot x=((-a)b) \cdot x$ for all $x \in M$ and also $((-b)a) \cdot x=0$ for all $x \in M$. Since $M$ is faithful we have $(-a)b=(-b)a=0$, which imply respectively that $b\le a$ and $a \le b$. Consequently we have $a=b$ and obviously $-a \cdot x= -b \cdot x$ and ${\sim}a \cdot x = {\sim}b \cdot x$ for all $x \in M$.
Vice versa, suppose $M$ is strong and that $a \cdot x=0=0 \cdot x$ for all $x \in M$, for some $0 \neq a$ in $A$. Then we have $-a \cdot x=-0 \cdot x= 1 \cdot x=x$ for all $x \in M$, which implies $(-a)^{n} \cdot x=x$ for all $x \in M$ and $n \in \mathbb{N}$. But, since $A$ is nilpotent, we have that $-a=1$ and so $a=0$, which contradicts the hypothesis.
\end{proof}

A semiring $A$ is \textit{multiplicatively idempotent} if $x \cdot x = x$ for every $x \in A$.

\begin{thm}
A $1$-bounded involutive semiring $A$ is multiplicatively idempotent if and only if $A$ is a Boolean algebra.
\end{thm}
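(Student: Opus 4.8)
The plan is to reduce the whole statement to one observation: in a $1$-bounded involutive semiring, multiplicative idempotency forces the monoid product to coincide with the lattice meet, and residuation then turns $A$ into a Heyting algebra on which the double-negation law holds.

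First, for the forward direction, assume $a\cdot a=a$ for all $a\in A$. Since $A$ is $1$-bounded, $1$ is the top element of the lattice, and since $\cdot$ distributes over $\vee$ it is monotone in each argument; hence $xy\le x\cdot 1=x$ and $xy\le 1\cdot y=y$, so $xy\le x\wedge y$. Conversely $(x\wedge y)(x\wedge y)\le xy$ by monotonicity, while the left-hand side equals $x\wedge y$ by idempotency, so $x\wedge y\le xy$. Thus $xy=x\wedge y$ for all $x,y\in A$; in particular $\cdot$ is commutative, so ${\sim}={-}$, and writing ${\sim}$ for this operation we have ${\sim}{\sim}x=x$ by involutivity.

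Next I would use (res). With $\cdot=\wedge$, the equivalence $x\wedge y\le z\iff y\le x\backslash z$ says exactly that $x\backslash z$ is the relative pseudocomplement of $x$ by $z$, so the bounded lattice reduct of $A$ is a Heyting algebra; in particular it is distributive, and its pseudocomplement is ${\sim}x=x\backslash 0$. By (res), $x\cdot(x\backslash 0)\le 0$, that is $x\wedge{\sim}x=0$, and the De Morgan law (valid because ${\sim}$ is an order-reversing involution on a lattice) gives ${\sim}(x\vee{\sim}x)={\sim}x\wedge{\sim}{\sim}x={\sim}x\wedge x=0$, whence $x\vee{\sim}x={\sim}0=1$. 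So in the bounded distributive lattice $A$ every element has the complement ${\sim}x$, which makes $A$ a Boolean algebra with $\cdot=\wedge$ and ${\sim}={-}$ equal to Boolean complementation (one could also just cite the classical fact that a Heyting algebra with ${\sim}{\sim}x=x$ is Boolean). Conversely, a Boolean algebra is an involutive residuated lattice with $\cdot=\wedge$, hence an involutive semiring by Theorem~\ref{termeq}, and there $x\cdot x=x\wedge x=x$; this settles the easy direction.

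I expect the step $xy=x\wedge y$ to be the only non-routine point; everything afterwards is either the standard fact that a residuated meet yields a Heyting algebra or the classical characterization of Boolean algebras among Heyting algebras. The point to watch is not to assume commutativity of $\cdot$ in advance: monotonicity together with $1$ being the top gives $xy\le x\wedge y$ with no commutativity, idempotency gives the reverse inequality, and commutativity then falls out for free.
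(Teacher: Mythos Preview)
Your proof is correct and follows essentially the same route as the paper's: both show $xy=x\wedge y$, deduce commutativity (hence ${\sim}={-}$), observe that residuation then makes $A$ a Heyting algebra, and invoke the double-negation law to conclude Booleanness. Your version is in fact more complete, since you spell out the two inequalities giving $xy=x\wedge y$ (which the paper asserts without justification) and you supply the easy converse direction, which the paper omits.
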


\begin{proof}
From $xx=x\le 1$ it follows that $x \cdot y = x \wedge y$, so the semiring is commutative and, in particular, $-x={\sim} x$. Defining $x \rightarrow y$ as ${\sim}((-y) \cdot x) = - ((-y) \cdot x)$, we obtain that $(A, \vee, \wedge, \rightarrow, 0, 1)$ is a Heyting algebra. We have $\neg x$ defined as $x \rightarrow 0$ and so $\neg \neg x = (x \rightarrow 0) \rightarrow 0 = - (1 \cdot (-x)) -(-x)=x$. Therefore the Heyting algebra is a Boolean algebra. 
\end{proof}

Let $A$ be a $1$-bounded involutive semiring. 
For a $n \in \mathbb{N}$, a semiring $A$ is $n$-\emph{von Neumann regular} if for every $a \in A$, there exists $b \in A$ such that $a^n = a^n \cdot b \cdot a^n$. A $1$-von Neumann regular semiring is simply called \emph{von Neumann regular}.
A semiring $A$ is $n$-\emph{potent} if $a^n = a^{n+1}$ for every $a \in A$, in particular a semiring $A$ is 1-potent if and only if it is multiplicatively idempotent. 

\begin{thm}
Let $A$ be a 1-bounded idempotent semiring and $n\in\mathbb N$. Then $A$ is $n$-von Neumann regular if and only if $A$ is $n$-potent.
\end{thm}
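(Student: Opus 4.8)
The plan is to exploit the order structure of a $1$-bounded idempotent semiring. First I would record the two basic facts that drive everything: (i) multiplication is monotone for the natural order, since $x\le y$ means $x\vee y=y$, and then $zx\vee zy=z(x\vee y)=zy$ and $xz\vee yz=(x\vee y)z=yz$, so $zx\le zy$ and $xz\le yz$; and (ii) $1$ is the top element, which is precisely $1$-boundedness, so $a\le 1$ for every $a\in A$. Combining (i) and (ii), multiplying $a\le 1$ on either side by a power of $a$ shows that the sequence of powers is descending, $a\ge a^2\ge a^3\ge\cdots$; in particular $a^{2n}=a^n\cdot a^n\le a^n\cdot 1=a^n$ for every $a$ and every $n$.

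For the direction \emph{$n$-potent $\Rightarrow$ $n$-von Neumann regular}: from $a^n=a^{n+1}$ a trivial induction gives $a^m=a^n$ for all $m\ge n$; in particular $a^{2n}=a^n$, so $a^n=a^n\cdot 1\cdot a^n$, and $b=1$ witnesses $n$-von Neumann regularity of $A$.

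For the converse, suppose $A$ is $n$-von Neumann regular and fix $a\in A$ with $a^n=a^n b a^n$ for some $b\in A$. Since $b\le 1$, monotonicity gives $a^n b a^n\le a^n\cdot 1\cdot a^n=a^{2n}$, so $a^n\le a^{2n}$; combined with $a^{2n}\le a^n$ from the descending chain, this yields $a^n=a^{2n}$. Now write $a^{2n}=a^{n+1}\cdot a^{n-1}$ and use $a^{n-1}\le 1$ to obtain $a^{2n}\le a^{n+1}$; hence $a^n=a^{2n}\le a^{n+1}\le a^n$, so $a^n=a^{n+1}$. As $a$ was arbitrary, $A$ is $n$-potent.

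The argument is essentially a squeeze between the descending chain of powers of $a$ and the two inequalities forced by the regularity identity, so I do not expect a real obstacle. The only points that need a moment's care are checking monotonicity of $\cdot$ and that $1$ is the top element in a $1$-bounded idempotent semiring, and noting that the edge case $n=1$ is already covered (there $a^{n-1}=a^0=1$, and the converse collapses to $a=aba\le a\cdot a=a^2\le a$, whence $a=a^2$).
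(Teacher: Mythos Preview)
Your proof is correct and follows essentially the same approach as the paper. The only organizational difference is that the paper first isolates the case $n=1$ (using distributivity to compute $a\vee a^2=(aba)\vee(a\cdot 1\cdot a)=a(b\vee 1)a=a^2$), then observes that this argument applied to $a^n$ yields $a^{2n}=a^n$, and finally squeezes $a^{2n}\le a^{n+1}\le a^n$ exactly as you do; you instead run the monotonicity argument directly for arbitrary $n$, which is equivalent and arguably tidier.
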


\begin{proof} We first prove the result for $n=1$.

$(\Rightarrow)$ Note that, since $A$ is 1-bounded, we have that $a \cdot a \leq a$, for any $a \in A$. Suppose that $A$ is von Neumann regular, so $a \vee (a \cdot a) = (a \cdot b \cdot a) \vee (a \cdot 1 \cdot a) = a \cdot (b \vee 1) \cdot a = a \cdot a$. Since $a \cdot a \leq a$, we have that $a \cdot a = a$.

$(\Leftarrow)$ Suppose $A$ is multiplicatively idempotent, then $a = a \cdot a = a \cdot 1 \cdot a$.

Now let $n\in\mathbb N$. From the two implications proved above,
we have that $A$ is $n$-von Neumann regular iff $a^{2n}=a^{n}$. Since $a^{2n} \leq a^{n+1} \leq a^{n}$, this implies $a^{n+1}=a^{n}$. Obviously $a^{n}=a^{n+1}$ implies $a^{2n}=a^{n}$ for any $a \in A$.
\end{proof}

\begin{thm}
For a $1$-bounded involutive semiring $A$, the following statements are equivalent:
\begin{enumerate}
\item Every left principal semiring ideal $A a$ of $A$ is injective as a semimodule;
	
\item $A$ is a self-injective von Neumann regular semiring;
	
\item $A$ is a complete Boolean algebra.	
\end{enumerate}
\end{thm}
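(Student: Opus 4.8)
The overall strategy is to prove the cycle $(3)\Rightarrow(2)\Rightarrow(1)\Rightarrow(3)$, leaning on two facts already established for a $1$-bounded involutive semiring $A$: by the case $n=1$ of the theorem on $n$-von Neumann regular semirings, ``$A$ is von Neumann regular'' is equivalent to ``$A$ is multiplicatively idempotent'', and by the theorem characterising multiplicative idempotence this is in turn equivalent to ``$A$ is a Boolean algebra''. Hence in all three statements one may freely replace ``self-injective von Neumann regular'' by ``self-injective Boolean algebra''. When $A$ is a Boolean algebra it is commutative (since $xx=x\le 1$ forces $xy=x\wedge y$), the two linear negations coincide with the Boolean complement, and the principal semiring ideal $Aa$ is just the order-ideal ${\downarrow}a$, which as an $A$-semimodule is a retract of the regular semimodule $A$ via $\rho(x)=xa$ and the inclusion ${\downarrow}a\hookrightarrow A$ (the composite sends $ya$ to $(ya)a=y(aa)=ya$).

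$(3)\Rightarrow(2)$. A complete Boolean algebra is multiplicatively idempotent, hence von Neumann regular, so it remains to show it is self-injective, and by Corollary~\ref{inject} it suffices to exhibit the regular $A$-semimodule as a retract of $\Id(A)$. I would take $s\colon A\to\Id(A)$, $s(a)={\downarrow}{-}a$, and $r\colon\Id(A)\to A$, $r(I)={-}\bigvee I$, where $\bigvee I$ exists precisely because $A$ is complete. A short De Morgan computation, using that $\{x\in A\mid xb\le {-}a\}={\downarrow}{-}(ba)$, shows $s$ is an $A$-semimodule homomorphism, and $r\circ s$ is the identity because ${-}{-}a=a$. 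The verification that $r$ is a homomorphism is the delicate part: $r(I\cap J)=r(I)\vee r(J)$ follows from $\bigvee(I\cap J)=\bigvee I\wedge\bigvee J$, i.e.\ from the frame distributivity of a complete Boolean algebra, while the scalar identity $r(a\cdot I)=a\cdot r(I)$ reduces to $\bigvee(a\cdot I)={-}a\vee\bigvee I$, which must be argued directly since $I$ need not be principal (one inclusion from $x=(x\wedge a)\vee(x\wedge {-}a)$, the other from $\epsilon\vee {-}a\in a\cdot I$ for every $\epsilon\in I$). Once $r$ is a homomorphism, $A$ is a retract of $\Id(A)$, hence of $\Id(A)^{X}$ for a one-element set $X$, and therefore injective by Corollary~\ref{inject}.

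$(2)\Rightarrow(1)$ is immediate from the first paragraph: von Neumann regularity gives $aa=a$, so $Aa={\downarrow}a$ is a retract of the regular semimodule $A$, which is injective by self-injectivity, and a retract of an injective semimodule is injective. For $(1)\Rightarrow(3)$, taking $a=1$ shows that $A=A\cdot 1$ is injective, i.e.\ $A$ is self-injective. Fix $a$; injectivity of $Aa$ lets us extend the identity of $Aa$ along $Aa\hookrightarrow A$ to a homomorphism $\beta\colon A\to Aa$ with $\beta|_{Aa}$ the identity. Since $\beta$ is a left $A$-module homomorphism, $\beta(x)=\beta(x\cdot 1)=x\beta(1)$, so $\beta$ is right multiplication by $\beta(1)$, and $\beta(1)\in Aa$ means $\beta(1)=za$ for some $z$; evaluating $\beta$ at $a\in Aa$ gives $a=\beta(a)=a(za)=aza$, so $A$ is von Neumann regular and hence a Boolean algebra. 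Being a Boolean algebra it is a distributive additively idempotent semiring, and being injective over itself it is MID-complete by Theorem~\ref{Theorem 9}; in particular its lattice reduct is complete, so $A$ is a complete Boolean algebra.

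The step I expect to be the real obstacle is the proof that $r(I)={-}\bigvee I$ is an $A$-semimodule homomorphism in $(3)\Rightarrow(2)$, specifically the scalar-multiplication identity, because the action $a\cdot I=\{x\mid xa\in I\}$ is a ``residual'' rather than a principal ideal and its supremum must be computed carefully when $I$ is not principal; everything else is bookkeeping with the three earlier characterisations of injectivity, von Neumann regularity, and multiplicative idempotence.
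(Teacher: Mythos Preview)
Your argument is correct and complete; the retraction $r(I)=-\bigvee I$ really is an $A$-semimodule homomorphism for the reasons you sketch, and the two ``delicate'' inclusions for the scalar identity go through exactly as you indicate (using $x=(x\wedge a)\vee(x\wedge{-}a)$ for one direction and $\varepsilon\vee{-}a\in a\cdot I$ for the other).

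The paper proves the same cycle in the opposite direction, $(1)\Rightarrow(2)\Rightarrow(3)\Rightarrow(1)$, and the substantive difference is in how self-injectivity of a complete Boolean algebra is obtained. The paper simply \emph{cites} Fofanova's result \cite[Corollary~2]{fofa:iopoba} that a complete Boolean algebra is self-injective and then observes that each $Aa$ is a retract of $A$; you instead give a self-contained proof by exhibiting the regular semimodule $A$ as a retract of $\Id(A)$ via $s(a)={\downarrow}{-}a$ and $r(I)={-}\bigvee I$, and then invoke the paper's own Corollary~\ref{inject}. Your route is longer but keeps everything internal to the machinery already developed in the paper, while the paper's route is shorter but imports an external theorem. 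The remaining pieces (extracting von Neumann regularity from injectivity of $Aa$ via $\beta(x)=x\beta(1)$, and completeness from self-injectivity via Theorem~\ref{Theorem 9}) are handled identically in both proofs, just distributed differently among the implications.
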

\begin{proof}
(1)$\Rightarrow$(2). We need only to show that $A$ is a von Neumann regular semiring. Indeed, let $a \in A$. Then, by condition (1), $A \cdot a$ is an injective $A$-semimodule. We then have that there exists an $A$-homomorphism $f: A\longrightarrow A \cdot a$ such that $f|_{A \cdot a} = id_{A \cdot a}$. It implies that $$a = f(a) = f(a\cdot 1) = af(1).$$ On the other hand, since $f(1) \in A \cdot a$, there exists an element $b\in A$ such that $f(1) = b\cdot a$, and hence, $a = a\cdot b\cdot a$. Thus, $A$ is von Neumann regular.

(2)$\Rightarrow$(3). Since $A$ is a self-injective semiring and applying Theorem \ref{Theorem 9}, the lattice $A$ is complete.

From the previous theorems we know that, since $A$ is von Neumann regular then it is idempotent and consequently a Boolean algebra.

(3)$\Rightarrow$(1). Suppose $A$ is a complete Boolean algebra. By \cite[Corollary 2]{fofa:iopoba}, $A$ is a self-injective semiring. Take any $a\in A$. We have that $a\cdot a = a$. Define two $A$-homomorphisms $\alpha: A \cdot a\longrightarrow A$ and $\beta: A\longrightarrow A \cdot a$ by setting: $\alpha(b\cdot a) = b\cdot a$ and $\beta(b) = b\cdot a$ for all $b\in A$. It is obvious that $\beta\alpha = id_{A \cdot a}$; that means, $A \cdot a$ is a retract of the $A$-semimodule $A$. By $A$ is self-injective and by \cite[Lemma 3.1]{dlnv:isaidscmv}, $A \cdot a$ is an injective $A$-semimodule, and hence, statement (1) is proved, finishing the proof.
\end{proof} 

\begin{thm}
Let $A$ be a 1-bounded semiring. Then, for a fixed $n \in \mathbb{N}$, the following statements are equivalent:
\begin{enumerate}
\item for every $a \in A$ the cyclic semimodule generated by $a^n$ is injective as a semimodule on $A$;
\item $A$ is self-injective and n-potent.
\end{enumerate}
\end{thm}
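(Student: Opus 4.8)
The plan is to prove the two implications separately, and the key preliminary remark is that every $1$-bounded semiring is additively idempotent: since $1+1=1$, we get $a+a=a\cdot 1+a\cdot 1=a\cdot(1+1)=a\cdot 1=a$ for every $a$. In particular $A$ is a $1$-bounded idempotent semiring, so the earlier equivalence between $n$-von Neumann regularity and $n$-potence applies, as does \cite[Lemma 3.1]{dlnv:isaidscmv}, which says that a retract of an injective semimodule is injective.

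For $(2)\Rightarrow(1)$, suppose $A$ is self-injective and $n$-potent. Then $a^{n}=a^{n+1}=\dots=a^{2n}$, so $(a^{n})^{2}=a^{2n}=a^{n}$, i.e. $a^{n}$ is multiplicatively idempotent, and the cyclic semimodule it generates is the principal left ideal $Aa^{n}\subseteq A$. I would take $\alpha:Aa^{n}\to A$ to be the inclusion and $\beta:A\to Aa^{n}$ to be $\beta(b)=b\cdot a^{n}$; both are $A$-homomorphisms, and for $x=c\cdot a^{n}\in Aa^{n}$ one has $\beta\alpha(x)=x\cdot a^{n}=c\cdot a^{2n}=c\cdot a^{n}=x$, so $\beta\alpha=id_{Aa^{n}}$ and $Aa^{n}$ is a retract of $A$. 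Since $A$ is self-injective, $Aa^{n}$ is injective by \cite[Lemma 3.1]{dlnv:isaidscmv}.

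For $(1)\Rightarrow(2)$, taking $a=1$ in the hypothesis shows that the cyclic semimodule generated by $1^{n}=1$, namely $A$ itself, is injective, so $A$ is self-injective. For an arbitrary $a\in A$, the subsemimodule $Aa^{n}$ of $A$ is injective by hypothesis, so the identity $id_{Aa^{n}}$ extends along the inclusion $Aa^{n}\hookrightarrow A$ to an $A$-homomorphism $f:A\to Aa^{n}$ with $f|_{Aa^{n}}=id_{Aa^{n}}$. Then $a^{n}=f(a^{n})=f(a^{n}\cdot 1)=a^{n}\cdot f(1)$, and since $f(1)\in Aa^{n}$ we may write $f(1)=b\cdot a^{n}$ for some $b\in A$, giving $a^{n}=a^{n}\cdot b\cdot a^{n}$. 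Thus $A$ is $n$-von Neumann regular, hence $n$-potent by the earlier equivalence, and the proof is complete.

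I expect this to be essentially routine, a direct lifting of the $n=1$ cases treated in the preceding theorems. The points that need a little care are the well-definedness and $A$-linearity of $\alpha$ and $\beta$ in $(2)\Rightarrow(1)$; the observation in $(1)\Rightarrow(2)$ that the extension $f$ restricts to the actual identity on $Aa^{n}$ (which is precisely what the injectivity diagram for $id_{Aa^{n}}$ and the inclusion guarantees); and confirming that the hypothesis ``$1$-bounded'' is enough to invoke the $n$-von Neumann regular versus $n$-potent theorem, which it is, since $1$-boundedness forces additive idempotency.
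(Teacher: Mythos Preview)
Your proof is correct and follows essentially the same route as the paper: both directions use the retraction $\alpha,\beta$ given by inclusion and right multiplication by $a^{n}$ (together with $a^{2n}=a^{n}$) for $(2)\Rightarrow(1)$, and the extension $f$ of $id_{Aa^{n}}$ along the inclusion to obtain $a^{n}=a^{n}ba^{n}$ for $(1)\Rightarrow(2)$, then invoke the equivalence of $n$-von Neumann regularity and $n$-potence. Your explicit remark that $1$-boundedness forces additive idempotence is a helpful clarification the paper leaves implicit.
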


\begin{proof}
$(1) \Rightarrow (2)$ Obviously $A$ is self-injective since it is generated by $1^n$. If $A \cdot a^n$ is injective, then exists a $A$ - homomorphism $f: A \to A \cdot a^n$ such that $f|_{A \cdot a^n} = id_{A \cdot a^n}$. It implies that $a^n=f(a^n)=f(a^n \cdot 1)= a^n \cdot f(1)$. Since $f(1) \in A \cdot a^n$, we have that exists an element $b \in A$ such that $f(1)=b \cdot a^n$, so $a^n= a^n \cdot b \cdot a^n$.

We then get that $A$ is n-von Neumann regular and for a previous remark $a^n = a^{n+1}$, for every $a \in A$.\\

$(2) \Rightarrow (1)$ Define $\alpha: A \cdot a^n \to A$ by $\alpha (b \cdot a^n) = b \cdot a^n$ and $ \beta : A \to A \cdot a^n$ by $ \beta (b)= b \cdot a^n$. We then have that $ \beta \alpha (b \cdot a^n) = b \cdot a^{2n}$. Since $a^n=a^{n+1}$ implies $a^{2n}=a^n$ and consequently $b \cdot a^{2n} = b \cdot a^n $, we have that $ \beta \alpha = id_{A \cdot a^n}$. So, $A \cdot a^n$ is a retract of $A$ which is self-injective. This implies that $A \cdot a^n$ is injective too.
\end{proof}

As an example, consider the finite commutative involutive linearly-ordered $1$-bounded involutive semiring $C=\{0, a, b, 1\}$ where $0<a<b<1$, $a \cdot a=0$ and $b \cdot b = b$.
\medskip

\begin{center}
\begin{tikzpicture}[scale=0.7]
\node (1) at (0,2) {$1$};
\node (b) at (0,1) {$b$};
\node (b') at (2,1) {$b^2=b$};
\node (a) at (0,0) {$a$};
\node (a') at (2,0) {$a^2=0$};
\node (0) at (0,-1) {$0$};
\draw (0)--(a)--(b)--(1);
\end{tikzpicture}
\end{center}
\medskip

It is easy to see that $C$ is 2-potent and 
we know that $C$ self-injective since it is a projective $C$-semimodule and therefore injective
by Theorem~\ref{injiffproj}.
Hence all the cyclic semimodules of the form $Cc^{n}$ for some $c \in C$ are injective and projective. In particular we have that the semimodules 
$\{0\}$, $Cb$ and $C$ are injective and, using Theorem~\ref{injiffproj} again, also projective.

\noindent\textbf{Acknowledgement}. The second author is very grateful for support from the SYSMICS project and for the opportunity to spend a month doing research at Chapman University.

\end{document}